\theoremstyle{plain}
\newtheorem{theorem}{Theorem}[section]
\newtheorem{lemma}[theorem]{Lemma}
\newtheorem{corollary}[theorem]{Corollary}
\newtheorem{examples}[theorem]{Examples}
\newtheorem{proposition}[theorem]{Proposition}
\theoremstyle{definition}
\newtheorem{definition}[theorem]{Definition}
\title[]{The growth series of Dyer groups}
\author{Luis Paris and Olga Varghese}
\date{\today}
\address{Luis Paris\\
IMB, UMR 5584, CNRS, Universit\'{e} de Bourgogne, 21000 Dijon (France)}
\email{lparis@u-bourgogne.fr}
\address{Olga Varghese\\ Institute of Mathematics, Heinrich-Heine-University Düsseldorf, Universitätsstra{\upshape{\ss}}e 1, 40225, Düsseldorf, Germany.}
\email{olga.varghese@hhu.de}
\begin{document}
	
\pagenumbering{arabic}
	
\begin{abstract}
Graph products of cyclic groups and Coxeter groups are two families of groups that are defined by labeled graphs. The family of Dyer groups contains these both families and gives us a framework to study these groups in a unified way. This paper focuses on the growth series of a Dyer group $D$ with respect to the standard generating set. We give a recursive formula for the growth series of $D$ in terms of the growth series of standard parabolic subgroups. As an application we obtain the rationality of the growth series of a Dyer group. Furthermore, we show that the growth series of $D$ is closely related to the Euler characteristic of $D$.

\vspace{1cm}
\hspace{-0.6cm}
{\bf Key words.} \textit{Dyer groups, Coxeter groups, right-angled Artin groups, the (standard) growth series of groups, the rational Euler characteristic.}	
\medskip

\medskip
\hspace{-0.5cm}{\bf 2010 Mathematics Subject Classification.} 20F36, 20F55.

\thanks{The first author is supported by the French project “AlMaRe” (ANR-19-CE40-0001-01) of the ANR.
The second author is supported by DFG grant VA 1397/2-2.}

\end{abstract}
\maketitle

\section{Introduction}

\noindent
Let $(G,S)$ be a pair where $G$ is a group and $S=\left\{s_1, \ldots, s_k\right\}$ is a generating set of $G$. One way to study the group $G$ is by counting its elements algebraically/geometrically. 
Each element $g\in G$ can be written as a word $g=x_1\ldots x_n$ where each letter $x_i$, $i=1,\ldots, n$ lies in the alphabet $S\cup S^{-1}=\left\{s_1,\ldots, s_k, s_1^{-1},\ldots, s_k^{-1}\right\}$. The \emph{length of $g$}, denoted by $l(g)=l_S(g)$, is the minimal length  of a word expression of $g$ in the alphabet $S\cup S ^{-1}$. We count the number of elements of length $n$ in $G$ and convert this
sequence into a formal power series: $$\mathcal{G}_{(G,S)}(t):=\sum_{n=0}^\infty |\left\{g\in G\mid l(g)=n\right\}|\cdot t^n\,.$$
Thus, $\mathcal{G}_{(G,S)}(t)=\sum_{n=0}^\infty a_n\cdot t^n$ where $a_n$ is the number of vertices in a sphere of radius $n$ in the Cayley-graph $Cay(G,S)$. 
This formal power series is called \emph{the (standard) growth series of $G$ (with respect to $S$)} and tends to be an important measure of complexity for infinite groups. 
 
Let us calculate the growth series of the infinite cyclic group with the canonical generating set $\mathcal{G}_{(\mathbb{Z},\left\{1\right\})}(t)=1+2t+2t^2+2t^3+\ldots= 1+2(t+t^2+t^3\ldots)=1+\frac{2t}{1-t}=\frac{1+t}{1-t}$. Thus, $\mathcal{G}_{(\mathbb{Z},\left\{1\right\})}(t)$ is a rational function. By definition, a pair $(G,S)$ has \emph{rational growth series} if there exist  polynomials $f(t)$ and 
$g(t)$ with integer coefficients such that $\mathcal{G}_{(G,S)}(t)=\frac{f(t)}{g(t)}$. Many groups that appear in geometric group theory have rational growth series, for example Coxeter groups \cite{Solomon,Bourbaki}, surface groups \cite{Cannon,CannonWagreich}, virtually abelian groups \cite{Benson} and hyperbolic groups\cite{Gromov,GhysHarpe}. However, there exist nilpotent groups and finite generating sets, such that the growth series are not rational \cite{Stoll}.   
The central object under our investigation is the growth series of Dyer groups. 
Throughout this paper $(\Gamma, m, f)$ is a Dyer graph and $(D,V)$ where $V=V(\Gamma)$ is the associated Dyer system. This means that the vertex set $V(\Gamma)$ is finite and is endowed with a map $f\colon V(\Gamma)\to\mathbb{N}_{\geq 2}\cup\left\{\infty\right\}$ and the edge set $E(\Gamma)$ is endowed with a map $m\colon E(\Gamma)\to\mathbb{N}_{\geq 2}$. For two letters $a, b$ and a natural number $m$ we define $\pi(a,b,m):= abababa\ldots$ where the length of the word is $m$. 
Further, we assume that for every edge $e=\left\{x,y\right\}\in E(\Gamma)$ if $m(e)\neq 2$, then $f(x)=f(y)=2$. The associated \emph{Dyer group} is defined as follows
\begin{gather*}
D:=\langle V\mid x^{f(x)} \text{ if }f(x)\neq\infty,\ \pi(x,y,m(\left\{x,y\right\}))=\pi(y,x,m(\left\{x,y\right\}))\\
\text{ if }\left\{x,y\right\}\in E(\Gamma)\rangle\,.
\end{gather*}
We note that, if $f(x)=2$ for all $x\in V(\Gamma)$, then $D$ is a Coxeter group, and if $f(x)=\infty$ for all $x\in V(\Gamma)$, then $D$ is a right-angled Artin group. 

For a subset $Y\subseteq V$ we denote by $D_Y$ the subgroup in $D$ which is generated by the set $Y$. 
This subgroup is called  a \emph{standard parabolic subgroup}.
It is shown in \cite{Dyer} that $(D_Y,Y)$ is itself a Dyer system which is associated to the Dyer graph ($\Gamma_Y,m_Y,f_Y)$, where $\Gamma_Y$ is the full subgraph of $\Gamma$ spanned by $Y$, $m_Y$ is the restriction of $m$ to $E(\Gamma_Y)$, and $f_Y$ is the restriction of $f$ to $V(\Gamma_Y)=Y$.

Let $(D, V)$ be a Dyer system. We define $V_2:=\left\{x\in V\mid  f(x)=2\right\}$, $V_\infty:=\left\{x\in V\mid f(x)=\infty\right\}$, $V_p:=\left\{x\in V| 2<f(x)<\infty \right\}$.
Let $D_2$ resp. $D_\infty$ resp. $D_p$ be the subgroup of $D$ generated by $V_2$ resp. $V_\infty$ resp. $V_p$. The Dyer group $D$ is of \emph{spherical type} if $\Gamma$ is a complete graph and $D_2$ is finite.
In particular, if $D$ is of  spherical type, then $D=D_2\times D_p\times D_\infty$, $D_p$ is a finite abelian group, and $D_\infty=\mathbb{Z}^l$ where $l=|V_\infty|$.

\bigskip
We state now our main result.

\begin{theorem}
\label{MainTheorem}
Let $(D,V)$ be a Dyer system. 
\begin{enumerate}
\item If $D$ is not of spherical type, then
$$\frac{(-1)^{|V|+1}}{\mathcal{G}_{(D,V)}(t)}=\sum_{Y\subsetneq V}\frac{(-1)^{|Y|}}{\mathcal{G}_{(D_Y,Y)}(t)}\,.$$
\item If $D$ is of spherical type, then we decompose $D_p=\prod_{x\in V_p}\mathbb{Z}/f(x)\mathbb{Z}$ and let $l=|V_\infty|$. Then
$$\mathcal{G}_{(D,V)}(t)=\mathcal{G}_{(D_2,V_2)}(t)\cdot \mathcal{G}_{(D_p,V_p)}(t)\cdot \mathcal{G}_{(D_\infty,V_\infty)}(t)\,.$$
where
\begin{itemize}
\item $D_2$ is a finite Coxeter group, hence $\mathcal{G}_{(D_2,V_2)}(t)$ can be calculated using the formula for finite Coxeter groups \cite{Solomon}:
$$\mathcal{G}_{(D_2,V_2)}(t)=\prod_{i=1}^k(1+t+\ldots+t^{m_i})\,,$$
where $m_1,\dots,m_k$ are the exponents of $(D_2,V_2)$.
\item $\mathcal{G}_{(D_p,V_p)}(t)=\prod_{x\in V_p}\mathcal{G}_{(\mathbb{Z}/f(x)\mathbb{Z},\left\{1\right\})}(t)$. If $f(x)=2r$, then $$\mathcal{G}_{(\mathbb{Z}/f(x)\mathbb{Z},\left\{1\right\})}(t)=1+2t+2t^2+\ldots+2t^{r-1}+t^r\,.$$
If $f(x)=2r+1$, then $$\mathcal{G}_{(\mathbb{Z}/f(x)\mathbb{Z},\left\{1\right\})}(t)=1+2t+2t^2+\ldots+2t^{r}\,.$$
\item $\mathcal{G}_{(D_\infty,V_\infty)}(t)=\frac{(1+t)^l}{(1-t)^l}$.
\end{itemize}
\end{enumerate}
\end{theorem}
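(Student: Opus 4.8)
The plan is to treat the two parts by quite different means: part (2) by a direct computation, and part (1) by a Coxeter-style inclusion--exclusion over the standard parabolic subgroups. For part (2), note that when $D$ is of spherical type $\Gamma$ is complete, and by the standing hypothesis relating $m$ and $f$ every vertex of $V_p\cup V_\infty$ commutes with all of $V$; hence $D=D_2\times D_p\times D_\infty$ with $V=V_2\sqcup V_p\sqcup V_\infty$ the corresponding partition of the generating set. Since the canonical generating set of a direct product is the disjoint union of those of the factors and word length is additive across the factors, $\mathcal{G}_{(D,V)}(t)$ is the product of the three growth series. The factor $\mathcal{G}_{(D_2,V_2)}(t)$ is Solomon's formula \cite{Solomon} for the finite Coxeter group $D_2$; $\mathcal{G}_{(D_p,V_p)}(t)$ reduces, by the same product principle, to $\mathcal{G}_{(\mathbb{Z}/n\mathbb{Z},\{1\})}(t)$, which one reads off from $l(k)=\min(k,n-k)$, giving the two stated polynomials according to the parity of $n$; and $\mathcal{G}_{(D_\infty,V_\infty)}(t)$ is the $l$-th power of $\mathcal{G}_{(\mathbb{Z},\{1\})}(t)=\tfrac{1+t}{1-t}$ since $D_\infty=\mathbb{Z}^l$.

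For part (1), for $Y\subseteq V$ let $D^Y$ be the set of elements of $D$ of minimal length in their coset $gD_Y$, and set $\mathrm{Desc}(g):=\{x\in V : g\notin D^{\{x\}}\}$. The structural input, which is the analogue for Dyer systems of classical facts for Coxeter systems and should be available from the theory of Dyer groups (see \cite{Dyer}): a deletion/exchange-type property, uniqueness of the minimal element in each coset $gD_Y$, the coincidence of length in a standard parabolic with length in $D$, and the identity $D^Y=\bigcap_{x\in Y}D^{\{x\}}$; from the first of these the multiplication map $D^Y\times D_Y\to D$ is a length-additive bijection. This yields $\mathcal{G}_{(D,V)}(t)=\big(\sum_{g\in D^Y}t^{l(g)}\big)\cdot\mathcal{G}_{(D_Y,Y)}(t)$, while $D^Y=\bigcap_{x\in Y}D^{\{x\}}$ gives $g\in D^Y\iff Y\cap\mathrm{Desc}(g)=\emptyset$. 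Summing $(-1)^{|Y|}\sum_{g\in D^Y}t^{l(g)}$ over all $Y\subseteq V$ and exchanging the order of summation, the coefficient of $t^{l(g)}$ becomes $\sum_{Y\subseteq V\setminus\mathrm{Desc}(g)}(-1)^{|Y|}$, which is $0$ unless $\mathrm{Desc}(g)=V$. Substituting $\sum_{g\in D^Y}t^{l(g)}=\mathcal{G}_{(D,V)}(t)/\mathcal{G}_{(D_Y,Y)}(t)$ and dividing by the unit $\mathcal{G}_{(D,V)}(t)$ of $\mathbb{Z}[[t]]$ gives
$$\sum_{Y\subseteq V}\frac{(-1)^{|Y|}}{\mathcal{G}_{(D_Y,Y)}(t)}=\frac{1}{\mathcal{G}_{(D,V)}(t)}\sum_{\substack{g\in D\\ \mathrm{Desc}(g)=V}}t^{l(g)}.$$

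Isolating the $Y=V$ term, part (1) follows once we show the right-hand side vanishes when $D$ is not of spherical type, i.e.\ that no $g\in D$ has $\mathrm{Desc}(g)=V$. If $\Gamma$ is not complete, choose non-adjacent $x,y\in V$, so $D_{\{x,y\}}=\langle x\rangle\ast\langle y\rangle$; writing $g=g^{\{x,y\}}h$ with $h\in D_{\{x,y\}}$, length-additivity shows that for $z\in\{x,y\}$ one has $z\in\mathrm{Desc}(g)\iff z\in\mathrm{Desc}_{D_{\{x,y\}}}(h)$, and in a free product of two cyclic groups the reduced form of $h$ ends in a syllable from at most one factor, so $x,y$ cannot both lie in $\mathrm{Desc}(g)$. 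If $\Gamma$ is complete then $D=D_2\times D_p\times D_\infty$ with $D_2$ infinite (as $D$ is not spherical), descents split over direct products, so $\mathrm{Desc}(g)=V$ would force $\mathrm{Desc}_{D_2}(g_2)=V_2$, which is impossible since an infinite Coxeter group has no element with full descent set \cite{Bourbaki}. These two cases are exhaustive, so in each $\{g:\mathrm{Desc}(g)=V\}=\emptyset$, and part (1) follows.

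The step I expect to be the main obstacle is establishing (or, if it is already in the literature on Dyer groups, correctly invoking) the parabolic factorization $D^Y\times D_Y\xrightarrow{\ \sim\ }D$ together with $D^Y=\bigcap_{x\in Y}D^{\{x\}}$ and the agreement of parabolic length with length in $D$: these rest on a deletion/exchange-type analysis of geodesics in the Cayley graph of a Dyer system, the analogue of a substantial chapter of \cite{Bourbaki} in the Coxeter case, the extra subtlety being the behaviour of length along the cyclic cosets $g\langle x\rangle$ for $x\in V_p\cup V_\infty$.
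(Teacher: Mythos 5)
Your proposal is correct and follows essentially the same route as the paper: part (2) is the same direct-product computation, and your descent-set inclusion--exclusion for part (1) is precisely the paper's M\"obius inversion over the sets of $Y$-minimal elements (your $D^Y$ is its $A_Y$, your $\{g:\mathrm{Desc}(g)=V\}$ is its $B_\emptyset$), followed by the same two-case analysis (non-complete graph via the free product $\mathbb{Z}_{f(x)}\ast\mathbb{Z}_{f(y)}$, complete graph via the nonexistence of a longest element in an infinite Coxeter group) to show that set is empty in the non-spherical case. The structural inputs you flag as the main obstacle --- the length-additive factorization $D^Y\times D_Y\to D$ and the agreement of parabolic length with ambient length --- are exactly what the paper establishes from Dyer's $N(g)$-cocycle and the results of Paris--Soergel before invoking them, so your reliance on them matches the paper's.
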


As a direct consequence we obtain the rationality of the growth series of a Dyer system.

\begin{corollary}
Let $(D,V)$ be a Dyer system. The growth series of $D$ with respect to $V$ is rational.
\end{corollary}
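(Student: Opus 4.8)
The plan is to induct on $|V|$, with Theorem~\ref{MainTheorem} doing essentially all the work. I would take as base case $|V|\le 1$: then $D$ is trivial, infinite cyclic, or finite cyclic, and in each case $\mathcal{G}_{(D,V)}(t)$ is manifestly rational (it equals $1$, $\frac{1+t}{1-t}$, or a polynomial), so there is nothing to prove.

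For the inductive step I would fix a Dyer system $(D,V)$ with $|V|\ge 2$, assume rationality for all Dyer systems on strictly fewer generators, and split into the two cases of the theorem. If $D$ is of spherical type, part~(2) writes $\mathcal{G}_{(D,V)}(t)$ as the product of $\mathcal{G}_{(D_2,V_2)}(t)$, $\mathcal{G}_{(D_p,V_p)}(t)$, and $\mathcal{G}_{(D_\infty,V_\infty)}(t)$; the first two are explicit polynomials (Solomon's product formula and the finite-cyclic computations recorded in the theorem) and the last is $\frac{(1+t)^l}{(1-t)^l}$, so their product is rational — here the induction hypothesis is not even needed.

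If $D$ is not of spherical type, part~(1) reads
$$\frac{(-1)^{|V|+1}}{\mathcal{G}_{(D,V)}(t)}=\sum_{Y\subsetneq V}\frac{(-1)^{|Y|}}{\mathcal{G}_{(D_Y,Y)}(t)}\,.$$
By the remark following the definition of standard parabolic subgroups, each $(D_Y,Y)$ with $Y\subsetneq V$ is again a Dyer system, on $|Y|<|V|$ generators, so by the induction hypothesis each $\mathcal{G}_{(D_Y,Y)}(t)$ is a rational function; since it has constant term $1$, its reciprocal is a legitimate power series and again a rational function. The right-hand side is then a finite sum of rational functions, hence rational, and it has constant term $(-1)^{|V|+1}\ne 0$, so it is nonzero as a rational function. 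Inverting, $\mathcal{G}_{(D,V)}(t)=(-1)^{|V|+1}\bigl(\sum_{Y\subsetneq V}(-1)^{|Y|}/\mathcal{G}_{(D_Y,Y)}(t)\bigr)^{-1}$ is rational, and the induction closes.

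I expect the only delicate point — the closest thing to an obstacle — to be the bookkeeping about reciprocals: I would note once, up front, that every growth series occurring has constant term $1$, so all the reciprocals appearing in part~(1) are honest formal power series, the displayed identity may legitimately be read as an identity of rational functions, and the final inversion is valid. Beyond that, the corollary is an immediate formal consequence of Theorem~\ref{MainTheorem}.
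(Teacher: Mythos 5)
Your proof is correct and is exactly the argument the paper intends: the corollary is stated there as a direct consequence of Theorem~\ref{MainTheorem}, and your induction on $|V|$ (explicit product of rational functions in the spherical case, M\"obius-type recursion over proper parabolics otherwise) is the natural way to make that precise. Your attention to the constant terms, which justifies inverting the power series, is the right detail to flag and is handled correctly.
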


Often there are interesting connections between special values of the growth series of $(G,S)$ with other properties of a group $G$, for example it was proven in \cite{Serre} that for a Coxeter system $(W,S)$, the value $\mathcal{G}_{(W,S)}(1)$ is closely related to the rational Euler characteristic of $W$ which we denote by $\chi(W)$. More precisely:
$$\frac{1}{\mathcal{G}_{(W,S)}(1)}=\chi(W).$$
We prove that the same relation holds for all Dyer groups.

\begin{theorem}(see Theorem 5.3)
Let $(D,V)$ be a Dyer system. Then
$$\frac{1}{\mathcal{G}_{(D,V)}(1)}=\chi(D)\,.$$
\end{theorem}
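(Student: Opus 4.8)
I would prove this by induction on $|V|$, distinguishing the cases where the defining graph $\Gamma$ is complete and where it is not. First I would record the ingredients. Since a Dyer group is virtually torsion‑free and acts properly and cocompactly on a finite‑dimensional contractible complex, a torsion‑free finite‑index subgroup has a finite classifying space, so the rational Euler characteristic $\chi(D)\in\mathbb{Q}$ is defined and has the classical properties $\chi(G_1\times G_2)=\chi(G_1)\chi(G_2)$, $\chi(A*_CB)=\chi(A)+\chi(B)-\chi(C)$, $\chi(\mathbb{Z})=0$, and $\chi(G)=1/|G|$ for finite $G$ (see \cite{Serre}). On the growth side I would use that $\mathcal{G}_{(G_1\times G_2,\,S_1\sqcup S_2)}(t)=\mathcal{G}_{(G_1,S_1)}(t)\cdot\mathcal{G}_{(G_2,S_2)}(t)$ (lengths add across a direct product), that $\mathcal{G}_{(G,S)}(1)=|G|$ for finite $G$, and — the one genuinely external input — Serre's identity $1/\mathcal{G}_{(W,S)}(1)=\chi(W)$ for Coxeter systems \cite{Serre}. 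Throughout, $\mathcal{G}_{(D,V)}(t)$ is treated as a rational function evaluated at $t=1$; its coefficients being nonnegative, $\mathcal{G}_{(D,V)}(1)\neq 0$, so $1/\mathcal{G}_{(D,V)}(1)\in\mathbb{Q}$ is well defined, with the convention that it equals $0$ when $\mathcal{G}_{(D,V)}$ has a pole at $t=1$.

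If $\Gamma$ is complete, then $D=D_2\times D_p\times D_\infty$ with $D_2$ a Coxeter group, $D_p$ finite abelian, and $D_\infty=\mathbb{Z}^l$ for $l=|V_\infty|$. The product formula for growth series (which coincides with Theorem~\ref{MainTheorem}(2) when $D$ is of spherical type, and in general follows because lengths add over a direct product) gives $\mathcal{G}_{(D,V)}(t)=\mathcal{G}_{(D_2,V_2)}(t)\,\mathcal{G}_{(D_p,V_p)}(t)\,\mathcal{G}_{(D_\infty,V_\infty)}(t)$. Passing to reciprocals at $t=1$: $1/\mathcal{G}_{(D_2,V_2)}(1)=\chi(D_2)$ by Serre's identity (or directly $=1/|D_2|$ if $D_2$ is finite), $1/\mathcal{G}_{(D_p,V_p)}(1)=1/|D_p|=\chi(D_p)$, and $1/\mathcal{G}_{(D_\infty,V_\infty)}(1)=(1-t)^l/(1+t)^l\big|_{t=1}$, which is $1$ if $l=0$ and $0$ if $l>0$, in either case equal to $\chi(\mathbb{Z}^l)$. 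Multiplying and using multiplicativity of $\chi$ gives $1/\mathcal{G}_{(D,V)}(1)=\chi(D_2)\chi(D_p)\chi(D_\infty)=\chi(D)$. This step uses no induction hypothesis, and in particular it handles every Dyer system of spherical type.

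If $\Gamma$ is not complete, I would fix $x,y\in V$ with $\{x,y\}\notin E(\Gamma)$ and let $\Gamma_1,\Gamma_0,\Gamma_2$ be the full subgraphs of $\Gamma$ on $V\setminus\{x\}$, on the link of $x$, and on the star of $x$, respectively; each is proper ($\Gamma_1$ omits $x$, while $\Gamma_0,\Gamma_2$ omit $y$). Since every defining relator of $D$ involves either one generator or one edge, and every edge of $\Gamma$ lies in $\Gamma_1$ (if it misses $x$) or in $\Gamma_2$ (if it meets $x$), $D$ admits the visual amalgam decomposition $D=D_{V(\Gamma_1)}*_{D_{V(\Gamma_0)}}D_{V(\Gamma_2)}$; the same argument applied to induced subgraphs shows that for every $Y\subseteq V$, putting $Y_i=Y\cap V(\Gamma_i)$ we have $Y=Y_1\cup Y_2$, $Y_0=Y_1\cap Y_2$, and $D_Y=D_{Y_1}*_{D_{Y_0}}D_{Y_2}$ (using that standard parabolic subgroups of Dyer systems are again Dyer systems \cite{Dyer}). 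Hence $\chi(D_Y)=\chi(D_{Y_1})+\chi(D_{Y_2})-\chi(D_{Y_0})$. Substituting this into $\sum_{Y\subseteq V}(-1)^{|Y|}\chi(D_Y)$ and, in each of the three resulting sums, grouping the subsets $Y$ according to the value of $Y_i$, one obtains for each fixed $Y_i$ the factor $\sum_{S\subseteq V\setminus V(\Gamma_i)}(-1)^{|S|}$, which vanishes because $V\setminus V(\Gamma_i)\neq\emptyset$ in all three cases. Therefore $\sum_{Y\subseteq V}(-1)^{|Y|}\chi(D_Y)=0$, that is, $\sum_{Y\subsetneq V}(-1)^{|Y|}\chi(D_Y)=(-1)^{|V|+1}\chi(D)$.

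To conclude, I would evaluate the recursion of Theorem~\ref{MainTheorem}(1) at $t=1$. Every $Y\subsetneq V$ has $|Y|<|V|$, so the induction hypothesis gives $1/\mathcal{G}_{(D_Y,Y)}(1)=\chi(D_Y)$, and combining with the identity just obtained,
$$\frac{(-1)^{|V|+1}}{\mathcal{G}_{(D,V)}(1)}=\sum_{Y\subsetneq V}\frac{(-1)^{|Y|}}{\mathcal{G}_{(D_Y,Y)}(1)}=\sum_{Y\subsetneq V}(-1)^{|Y|}\chi(D_Y)=(-1)^{|V|+1}\chi(D),$$
so $1/\mathcal{G}_{(D,V)}(1)=\chi(D)$, which closes the induction. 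The main obstacle I anticipate is not the combinatorics above but its foundations: establishing that $\chi(D)$ is well defined and additive along the visual splittings rests on the (known but not entirely superficial) facts that Dyer groups are virtually torsion‑free and act geometrically on finite‑dimensional contractible complexes; and one must handle with care the evaluation and manipulation of the rational functions $\mathcal{G}_{(\cdot)}(t)$ at $t=1$, where a pole occurs exactly when the corresponding Euler characteristic vanishes.
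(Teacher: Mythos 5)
Your proof is correct, and it diverges from the paper's argument in the non-complete case in a genuine way. The complete-graph case is handled exactly as in the paper: split $D=D_2\times D_p\times D_\infty$, multiply growth series, and use Serre's identity for the Coxeter factor, $1/|D_p|$ for the finite factor, and the vanishing of both $\chi(\mathbb{Z}^l)$ and $(1-t)^l/(1+t)^l$ at $t=1$. For non-complete $\Gamma$, however, the paper does not invoke the recursion of Theorem \ref{MainTheorem} at all: it evaluates the amalgam formula of Corollary \ref{amalgamdecomposition} (coming from admissibility and Proposition \ref{admissible}) at $t=1$, matches it term by term with Brown's formula via Corollary \ref{EulerDyerFormulas}, and iterates these visual splittings until only parabolics with complete defining graphs remain. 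You instead evaluate Theorem \ref{MainTheorem}(1) at $t=1$ (legitimate, since a non-complete graph forces $D$ not to be of spherical type), prove the matching M\"obius-type identity $\sum_{Y\subseteq V}(-1)^{|Y|}\chi(D_Y)=0$ by splitting every parabolic $D_Y$ along the star of a vertex $x$ missing some $y$ and observing that each of the three grouped sums carries a vanishing factor $\sum_{S\subseteq V\setminus V(\Gamma_i)}(-1)^{|S|}$, and then close an explicit induction on $|V|$. Your route buys a cleanly stated induction and replaces the paper's somewhat informal ``decompose again and again'' step with a transparent inclusion--exclusion identity; the paper's route buys independence from the main theorem, using only the amalgam formula. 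Two small points to tighten: the degenerate cases of your splitting (when $x\notin Y$ or $st_{\Gamma_Y}(x)=Y$) give trivial amalgams, where the additivity of $\chi$ holds vacuously rather than by Brown's theorem, which is worth saying; and the foundational input that $\chi(D_Y)$ is defined and Proposition \ref{Brownformulas} applies is obtained in the paper from Soergel's embedding of Dyer groups as finite-index subgroups of Coxeter groups, which is the citation you should lean on (your geometric-action phrasing is fine but uncited). The evaluation of rational functions at $t=1$ is treated at the same level of rigor as in the paper, and your convention remark covers it.
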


\section{Preliminaries}

We start this chapter by reviewing some  standard facts of the word length and length functions.

\begin{definition}
Let $G$ be a finitely generated group and $S$ be a finite generating set.
\begin{enumerate}
\item For $g\in G$, $g\neq 1$  the \emph{word length of $g$} is defined as
$$l(g)=l_S(g)=min\left\{n\mid g=s_1^{\epsilon_1}s_2^{\epsilon_2}\ldots s_n^{\epsilon_n}, s_i\in S, \epsilon_i\in\left\{-1,1\right\}\right\}\,.$$
\item For $g\in G$, $g\neq 1$ the \emph{syllable length of $g$} is defined as
$$l_{sy}(g):=min\left\{m\mid g=s_1^{a_1}s_2^{a_2}\ldots s_m^{a_m}, s_i\in S, a_i\in\mathbb{Z}\right\}\,.$$
And we set $l_{sy}(1)=l(1)=0$.
\end{enumerate}
\end{definition}

We note that for a given group $G$ and a finite generating set $S$ consisting of elements of order two we have $l(g)=l_{sy}(g)$ for all $g\in G$.

The length of $g\in G$ is closely connected to the length of a special path in a geometric object  which is associated to the group $G$, the Cayley-graph $Cay(G,S)$. Before we give a definition of this graph we recall the definition and some important facts about general graphs which we will need later on.

A graph $\Gamma$ is a pair $(V(\Gamma),E(\Gamma))$ where $V(\Gamma)$ is a set whose elements are called \emph{vertices} and $E(\Gamma)$ is a subset of $\mathcal{P}_2(V):=\left\{X\mid X\subseteq V, |X|=2\right\}$ whose elements are called \emph{edges}. Usually, graphs are visualized graphically, where we draw for each vertex $x\in V(\Gamma)$ a point and label it with $x$ and two points $x, y$ are connected by a line if $\left\{x, y\right\}\in E(\Gamma)$. For example, the visualization of $\Gamma=(\left\{x_1, x_2, x_3\right\}, \left\{\left\{x_1,x_2\right\}\right\})$ is as shown in Figure 1.

\begin{figure}[h]
	\begin{center}
	\captionsetup{justification=centering}
		\begin{tikzpicture}
			\draw[fill=black]  (0,0) circle (2pt);
			\draw[fill=black]  (2,0) circle (2pt);
            \draw[fill=black]  (4,0) circle (2pt);
            \draw (0,0)--(2,0);
            \node at (0,-0.3) {$x_1$};
            \node at (2,-0.3) {$x_2$};
            \node at (4,-0.3) {$x_3$};
        \end{tikzpicture}
        \caption{The visualization of $\Gamma=(\left\{x_1, x_2, x_3\right\}, \left\{\left\{x_1,x_2\right\}\right\})$.}
    \end{center}
\end{figure} 

Given a graph $\Gamma$ and a vertex $x\in V(\Gamma)$ we define two subsets of $V(\Gamma)$ that are associated to $x$. The \emph{link of $x$}, denoted by $lk(x)$ is defined as $lk(x):=\left\{y\in V(\Gamma)\mid \left\{x,y\right\}\in E(\Gamma)\right\}$ and the \emph{star of $x$}, denoted by $st(x)$ is defined as $st(x):=lk(x)\cup\left\{x\right\}$. A graph $\Gamma$ is called \emph{complete} if $st(x)=V(\Gamma)$ for all $x\in V(\Gamma)$. A subgraph 
$\Omega\subseteq\Gamma$ is called \emph{full} if for all pair of vertices $(v,w)\in V(\Omega)\times V(\Omega)$ we have $\left\{v,w\right\}\in E(\Omega)$ if and only if $\left\{v,w\right\}\in E(\Gamma)$.

Let $G$ be a group and let $S$ be a generating set for $G$. The \emph{Cayley-graph for $G$ with respect to $S$}, denoted by $Cay(G,S)$ is a graph with vertex set $V(Cay(G,S))=G$ and edge set $E(Cay(G,S))=\left\{\left\{g, gs\right\}\mid g\in G, s\in S\cup S^{-1}\right\}$. The distance between two vertices is defined as a number of edges in a shortest path. Note that $l(g)$ is equal to the distance between the vertices $1_G$ and $g$. Hence the number of elements in $G$ with word length $n$ is equal to the number of vertices in the sphere with center $1_G$ of radius $n$ in $Cay(G,S)$.  Let us consider the Cayley-graph of the free group $F_2$ with the generating set $\left\{x,y\right\}$ in Figure 2.
$ $\\
 
\begin{figure}[h]
\begin{center}
\captionsetup{justification=centering}
\begin{tikzpicture}
\draw (0,-3)--(0,3);
\draw (-3,0)--(3,0);
\draw (-2,-1)--(-2,1);
\draw (-1,-2)--(1,-2);
\draw (2,-1)--(2,1);
\draw (-1,2)--(1,2);
\draw[fill=black]  (0,0) circle (1pt);
\node at (0.2,0.2) {\tiny{$1$}};
\draw[fill=black]  (2,0) circle (1pt);
\node at (1.8,0.2) {\tiny{$x$}};
\draw[fill=black]  (0,2) circle (1pt);
\node at (0.2,2.2) {\tiny{$y$}};
\draw[fill=black]  (-2,0) circle (1pt);
\node at (-1.7,0.2) {\tiny{$x^{-1}$}};
\draw[fill=black]  (0,-2) circle (1pt);
\node at (0.3,-1.8) {\tiny{$y^{-1}$}};
\draw[fill=black]  (2,1) circle (1pt);
\node at (2.3,1.2) {\tiny{$xy$}};
\draw[fill=black]  (2,-1) circle (1pt);
\node at (2.4,-0.8) {\tiny{$xy^{-1}$}};
\draw[fill=black]  (3,0) circle (1pt);
\node at (3.2,0.2) {\tiny{$xx$}};
\draw[fill=black]  (0,-3) circle (1pt);
\node at (0.55,-2.8) {\tiny{$y^{-1}y^{-1}$}};
\draw[fill=black]  (-1,-2) circle (1pt);
\node at (-1.5,-1.8) {\tiny{$y^{-1}x^{-1}$}};
\draw[fill=black]  (1,-2) circle (1pt);
\node at (1.4,-1.8) {\tiny{$y^{-1}x$}};
\draw[fill=black]  (-3,0) circle (1pt);
\node at (-3.4,0.2) {\tiny{$x^{-1}x$}};
\draw[fill=black]  (-2,1) circle (1pt);
\node at (-1.6,1.2) {\tiny{$x^{-1}y$}};
\draw[fill=black]  (-2,-1) circle (1pt);
\node at (-1.5,-0.8) {\tiny{$x^{-1}y^{-1}$}};
\draw[fill=black]  (0,3) circle (1pt);
\node at (0.2,3.2) {\tiny{$yy$}};
\draw[fill=black]  (-1,2) circle (1pt);
\node at (-1.3,2.2) {\tiny{$yx^{-1}$}};
\draw[fill=black]  (1,2) circle (1pt);
\node at (1.3,2.2) {\tiny{$yx$}};
\draw[dashed] (3,0)--(3.5,0);
\draw[dashed] (3,0.5)--(3,-0.5);
\draw[dashed] (-3,0)--(-3.5,0);
\draw[dashed] (-3,0.5)--(-3,-0.5);
\draw[dashed] (2,1)--(2,1.5);
\draw[dashed] (1.5,1)--(2.5,1);
\draw[dashed] (-2,1)--(-2,1.5);
\draw[dashed] (-1.5,1)--(-2.5,1);
\draw[dashed] (2,-1)--(2,-1.5);
\draw[dashed] (1.5,-1)--(2.5,-1);
\draw[dashed] (-2,-1)--(-2,-1.5);
\draw[dashed] (-1.5,-1)--(-2.5,-1);
\draw[dashed] (0,3)--(0,3.5);
\draw[dashed] (-0.5,3)--(0.5,3);
\draw[dashed] (0,-3)--(0,-3.5);
\draw[dashed] (-0.5,-3)--(0.5,-3);
\draw[dashed] (-1,2)--(-1.5,2);
\draw[dashed] (-1,1.5)--(-1,2.5);
\draw[dashed] (-1,-2)--(-1.5,-2);
\draw[dashed] (-1,-1.5)--(-1,-2.5);
\draw[dashed] (1,2)--(1.5,2);
\draw[dashed] (1,1.5)--(1,2.5);
\draw[dashed] (1,-2)--(1.5,-2);
\draw[dashed] (1,-1.5)--(1,-2.5);
\end{tikzpicture}
\caption{$Cay(F_2,\left\{x,y\right\}).$}
    \end{center}
\end{figure}
For example, the number of elements in $F_2$ with length $2$ is equal to $12$. One geometric way to count the elements in $F_2$ is by counting the vertices in the sphere with center $1$ of radius $n$ in the Cayley-graph $Cay(F_2,\left\{x,y\right\})$. Let $a_n$ be this number. We get the sequence $(a_n)_{n\in\mathbb{N}}$ where $a_n=4\cdot 3^{n-1}$. We convert this sequence into a formal power series $1+a_1t+a_2t^2+\ldots $ which leads us to 
the definition of the growth series. The best general reference on growth series of groups is \cite{Mann}.

\begin{definition}
Let $G$ be a group and $S$ be a finite generating set of $G$.
\begin{enumerate}
\item The \emph{growth series of $G$ with respect to $S$} is the formal series
$$\mathcal{G}_{(G,S)}(t):=\sum_{g\in G}t^{l(g)}=\sum_{n=0}^\infty |\left\{g\in G\mid l(g)=n\right\}|\cdot t^n\,.$$
\item The \emph{growth series of a subset $A\subseteq G$ with respect to $S$} is the formal series
$$\mathcal{G}_{(A,S)}(t):=\sum_{g\in A}t^{l(g)}=\sum_{n=0}^\infty |\left\{g\in A\mid l(g)=n\right\}|\cdot t^n\,.$$
\end{enumerate}
\end{definition}

We note that $\mathcal{G}_{(G,S)}(t)$ is an element in $\mathbb{Z}[[t]]$ the ring of formal power series in the variable $t$ over $\mathbb{Z}$. We now give some examples.

\begin{examples}
$ $
\begin{enumerate}
\item $\mathcal{G}_{(\mathbb{Z}/4\mathbb{Z}, \left\{1\right\})}(t)=1+2t+t^2$.
\item $\mathcal{G}_{(\mathbb{Z}/5\mathbb{Z},\left\{1\right\})}(t)=1+2t+2t^2$.
\item $\mathcal{G}_{(\mathbb{Z},\left\{1\right\})}(t)=1+2t+2t^2+2t^3+\ldots= 1+2(t+t^2+t^3\ldots)=1+\frac{2t}{1-t}=\frac{1+t}{1-t}$.
\end{enumerate}
\end{examples}

Given two groups $G$ and $H$ with finite generating sets $S_G$ resp. $S_H$, to construct a new group using  given ones it is natural to use direct or free product construction. For direct and free products, there are formulas for the growth series in terms of growth series of the factors \cite{Mann}. 
\begin{gather*}
\mathcal{G}_{(G\times H,S_G\cup S_H)}(t)=\mathcal{G}_{(G,S_G)}(t)\cdot \mathcal{G}_{(H,S_H)}(t)\,,\\
\frac{1}{\mathcal{G}_{(G*H, S_G\cup S_H)}(t)}=\frac{1}{\mathcal{G}_{(G, S_G)}(t)}+ \frac{1}{\mathcal{G}_{(H, S_H)}(t)}-1\,.
\end{gather*}

A generalization of direct resp. free product construction are amalgamated products and graph products of groups. Let us recall a formula for the growth series of an amalgamated product. First, we need a definition. 

\begin{definition}
Let $(G,S)$ be a pair where $G$ is a group generated by a finite set $S$. A pair $(H,T)$ is \emph{admissible} in $(G,S)$, if $H$ is a subgroup of $G$, $T\subseteq S$, and there exists a tranversal $U$ for $H$ in $G$ such that if $g=hu$ with $g\in G$, $h\in H$, $u\in U$, then $l_S(g)=l_T(h)+l_S(u)$. We always assume that the transversal contains the identity as the representative of $H$. 
\end{definition}

It was proven in \cite{Lewin} that if $(L,R)$ is admissible in $(H,S)$ and in $(K,T)$ then the growth series of $G=H*_L K$ can be computed using smaller pieces of $G$.

\begin{proposition}
\label{admissible}
If $(L,R)$ is admissible in $(H,S)$ and in $(K,T)$, then 
$$\frac{1}{\mathcal{G}_{(H*_L K, S\cup T)}(t)}=\frac{1}{\mathcal{G}_{(H, S)}(t)}+\frac{1}{\mathcal{G}_{(K, T)}(t)}-\frac{1}{\mathcal{G}_{(L, R)}(t)}\,.$$
\end{proposition}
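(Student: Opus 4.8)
The plan is to reduce the identity to a single combinatorial \emph{length lemma} for the amalgamated product $G := H *_L K$, and then to sum the resulting length formula into a rational generating function. Fix transversals $U_H$ for $L$ in $H$ and $U_K$ for $L$ in $K$, both containing $1$ and witnessing admissibility, and set $\bar{U}_H = U_H\setminus\{1\}$, $\bar{U}_K=U_K\setminus\{1\}$. Every $g\in G$ has a unique normal form $g = u_1u_2\cdots u_n\,\ell$ with $\ell\in L$, $n\ge 0$, each $u_i\in\bar{U}_H\cup\bar{U}_K$, and consecutive $u_i,u_{i+1}$ lying in different transversals. Put $\lambda(g):=l_R(\ell)+\sum_{i=1}^n\delta(u_i)$, where $\delta(u_i)=l_S(u_i)$ if $u_i\in\bar{U}_H$ and $\delta(u_i)=l_T(u_i)$ if $u_i\in\bar{U}_K$; this is well defined because admissibility (taking $u=1$ in the definition) forces $l_S(\ell)=l_T(\ell)=l_R(\ell)$ for $\ell\in L$, and it likewise forces $S\cap L\subseteq R$ and $T\cap L\subseteq R$.

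The heart of the proof is the length lemma: $l_{S\cup T}(g)=\lambda(g)$ for all $g\in G$. The inequality $l_{S\cup T}(g)\le\lambda(g)$ follows from the triangle inequality by concatenating geodesic words for $\ell$ and for each $u_i$ (using $R\subseteq S$ and $R\subseteq T$). For the reverse inequality I would induct on $l_{S\cup T}(g)$: it is enough to prove $\lambda(gx)\le\lambda(g)+1$ for every generator $x$, since then $\lambda(g)\le\lambda(1)+l_{S\cup T}(g)=l_{S\cup T}(g)$. By the symmetry between the two factors (and passing to inverses) we may assume $x\in H$, and then trace how right multiplication by $x$ changes the normal form $u_1\cdots u_n\,\ell$: writing $\ell x = u'\ell'$ with $u'\in U_H$, $\ell'\in L$, one either appends the syllable $u'$ (when $n=0$ or $u_n\in\bar{U}_K$) or, when $u_n\in\bar{U}_H$, absorbs it by re-reducing $u_nu'=u''\ell''$ inside $H$; possible collapses $u'=1$ or $u''=1$ are handled using $S\cap L\subseteq R$. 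In each case the change of $\lambda$ is estimated by the admissibility equality applied inside $H$, e.g.\ $l_S(u_n\ell x)=l_S(u'')+l_R(\ell''\ell')$ combined with $l_S(u_n\ell x)\le l_S(u_n)+l_R(\ell)+1$, and its degenerate analogues. This case analysis is where essentially all the real work lies and is the step I expect to be the main obstacle; the delicate point is keeping the $L$-part on the correct side of the normal form so that right multiplication stays tractable, and transferring admissibility between left and right transversals via inversion.

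Given the length lemma, the rest is bookkeeping in $\mathbb{Z}[[t]]$. Admissibility yields the factorizations $\mathcal{G}_{(H,S)}(t)=P_H(t)\,\mathcal{G}_{(L,R)}(t)$ and $\mathcal{G}_{(K,T)}(t)=P_K(t)\,\mathcal{G}_{(L,R)}(t)$, where $P_H(t)=\sum_{u\in U_H}t^{l_S(u)}$ and $P_K(t)=\sum_{u\in U_K}t^{l_T(u)}$. Summing $t^{\lambda(g)}$ over all normal forms and pulling out the free $L$-factor gives $\mathcal{G}_{(H*_L K,\,S\cup T)}(t)=\mathcal{G}_{(L,R)}(t)\cdot W(t)$, where $W(t)$ is the generating function of alternating words in $\bar{U}_H\cup\bar{U}_K$ (with the empty word allowed). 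Splitting $W=1+W_H+W_K$ by the type of the first syllable gives the linear system $W_H=(P_H-1)(1+W_K)$ and $W_K=(P_K-1)(1+W_H)$, whose solution is $W=\frac{P_HP_K}{1-(P_H-1)(P_K-1)}$. Substituting $P_H=\mathcal{G}_{(H,S)}/\mathcal{G}_{(L,R)}$ and $P_K=\mathcal{G}_{(K,T)}/\mathcal{G}_{(L,R)}$ and clearing denominators yields
$$\mathcal{G}_{(H*_L K,\,S\cup T)}(t)=\frac{\mathcal{G}_{(H,S)}(t)\,\mathcal{G}_{(K,T)}(t)\,\mathcal{G}_{(L,R)}(t)}{\mathcal{G}_{(H,S)}(t)\,\mathcal{G}_{(L,R)}(t)+\mathcal{G}_{(K,T)}(t)\,\mathcal{G}_{(L,R)}(t)-\mathcal{G}_{(H,S)}(t)\,\mathcal{G}_{(K,T)}(t)},$$
which upon taking reciprocals is exactly the asserted identity; specializing $L=\{1\}$ recovers the free product formula recalled above, a useful consistency check.
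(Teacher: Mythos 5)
Your proposal is correct, but note that the paper does not actually prove Proposition \ref{admissible}: it is quoted from Lewin's article \cite{Lewin} (see also Alonso's and Chiswell's work on growth of amalgams), so there is no in-text proof to compare against. What you have written is essentially the standard argument behind that citation, and it is complete in all essentials: the normal form $g=u_1\cdots u_n\ell$ with alternating nontrivial coset representatives, the length lemma $l_{S\cup T}(g)=l_R(\ell)+\sum_i\delta(u_i)$, and the transfer-matrix computation $W=\frac{P_HP_K}{1-(P_H-1)(P_K-1)}$, which does reduce to the stated reciprocal identity after substituting $P_H=\mathcal{G}_{(H,S)}/\mathcal{G}_{(L,R)}$ and $P_K=\mathcal{G}_{(K,T)}/\mathcal{G}_{(L,R)}$. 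I checked the case analysis you flag as the main obstacle, and it does go through exactly as you sketch: in the ``absorb'' case the admissibility equality $l_S(u_n\ell x)=l_S(u'')+l_R(\ell'')$ together with $l_S(u_n\ell x)\le l_S(u_n)+l_R(\ell)+1$ gives $\lambda(gx)\le\lambda(g)+1$, and the degenerate collapses $u'=1$, $u''=1$ only make the estimate easier since $l_S=l_R$ on $L$. Two small points: the paper's definition of admissibility is stated for a right transversal ($h=\ell u$), whereas your normal form carries the $L$-part on the right, so the inversion trick you mention is genuinely needed (and suffices, since $l(g)=l(g^{-1})$); and your parenthetical claim that admissibility forces $S\cap L\subseteq R$ is not literally what it gives (only $l_R(s)=1$, i.e.\ $s\in R\cup R^{-1}$), but nothing in the argument uses it. As you note, specializing $L=\{1\}$ recovers the free product formula, which is a good sanity check.
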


Now we move on to graph products of groups. Given a finite graph $\Gamma$ and a collection of groups $G_x$ for $x\in V(\Gamma)$, the \emph{graph product of groups} is defined as
$$G_\Gamma=\big(\ast_{x\in V(\Gamma)} G_x\big)/\langle\langle [g,h]\mid g\in G_x, h\in G_y, \left\{x,y\right\}\in E(\Gamma)\rangle\rangle\,.$$
We note that, if $\Gamma$ is discrete, then the associated graph product of groups is the free product of the vertex groups and if $\Gamma$ is complete, then the associated graph product of groups is the direct product of the vertex groups. If all vertex groups are infinite cyclic, then we call $G_\Gamma$ a right-angled Artin group.
 For every vertex group $G_x$ let $S_x$ be a finite generating set and we set $S:=\bigcup_{x\in V(\Gamma)} S_x$. 
A formula for the growth series of $G_\Gamma$ in terms of the growth series of the vertex groups was proven in \cite{Lewin} for isomorphic vertex groups. Here we recall a special case of this  formula where $G_\Gamma$ is a right-angled Artin group and $S_x=\left\{1\right\}$ for every $x\in V$. Let $c_i$ be the number of complete subgraphs in $\Gamma$ on $i$ vertices. Then
$$\frac{1}{\mathcal{G}_{(G_\Gamma,S)}(t)}=\sum_i (-1)^i c_i \frac{(\frac{1+t}{1-t}-1)^i}{(\frac{1+t}{1-t})^i}\,.$$

This formula was generalized for arbitrary vertex groups in \cite{Chiwell}. Let $G_\Gamma$ be a graph product of finitely generated vertex groups. We define for each complete subgraph $\Delta\subseteq\Gamma$, $P_\Delta(t):=\prod_{x\in V(\Delta)}(\frac{1}{\mathcal{G}_{(G_x, S_x)}(t)}-1)$. Then
$$\frac{1}{\mathcal{G}_{(G_\Gamma, S)}(t)}=\sum P_\Delta(t)\,,$$
where the summation is taken over all complete subgraphs of $\Gamma$ including the empty one for which $P_\emptyset=1$. 

\bigskip
Further groups for which it is possible to compute the growth series using smaller building blocks of the group are Coxeter groups.
Coxeter groups have special subgroups which can be considered as building blocks for the whole group. Given a finite graph $\Gamma$ with an edge-labeling $m\colon E(\Gamma)\to\mathbb{N}_{\geq 2}$. The Coxeter group associated to $\Gamma$ is given by the presentation
$$W=\langle V(\Gamma)\mid x^2\text{ for all }x\in V(\Gamma), (xy)^{m(\left\{x,y\right\})}\text{ for all } \left\{x,y\right\}\in E(\Gamma)\rangle\,.$$

For any subset $X\subseteq V(\Gamma)$ the subgroup generated by the set $X$ is canonically isomorphic to the Coxeter group which is associated to the full subgraph of $\Gamma$ with the vertex set $X$. This subgroup is called a \emph{standard parabolic subgroup} and we denote it by $W_X$. A natural question is if it is possible to use the growth series of special parabolic subgroups to obtain a formula for the growth series of the whole group. It was proven in \cite{Solomon}, \cite{Bourbaki} that it is indeed the case.
Let $(W,S)$ be a Coxeter system.
If $W$ is finite, then
$$\frac{t^m+(-1)^{|S|+1}}{\mathcal{G}_{(W,S)}(t)}=\sum_{X\subsetneq S}\frac{(-1)^{|X|}}{\mathcal{G}_{(W_X,X)}(t)}\,,$$
where $m=max\left\{l(w)\mid w\in W\right\}$. The growth series of a finite Coxeter group can also be calculated using the non-recursive formula
$$\mathcal{G}_{(W,S)}(t)=\prod_{i=1}^k(1+t+\ldots+t^{m_i})\,,$$
where $m_1,\dots,m_k$ are the exponents of $(W,S)$.

If $W$ is infinite, then
$$\frac{(-1)^{|S|+1}}{\mathcal{G}_{(W,S)}(t)}=\sum_{X\subsetneq S}\frac{(-1)^{|X|}}{\mathcal{G}_{(W_X,X)}(t)}\,.$$

In particular, the above formulas show that for a Coxeter group $W$ there exists a polynomial $f(t)$ such that
$$\frac{f(t)}{\mathcal{G}_{(W,S)}(t)}=\sum_{X\subsetneq S}\frac{(-1)^{|X|}}{\mathcal{G}_{(W_X,X)}(t)}\,.$$

\section{Dyer groups}
We begin this chapter with the definition of the main protagonist in this article, a Dyer group. For two letters $a, b$ and a natural number $m$ we define $\pi(a,b,m):= abababa\ldots$ where the length of the word is $m$. For example $\pi(a,b,3)=aba$.

\begin{definition}
$ $
\begin{enumerate}
\item A \emph{Dyer graph} is a triple $(\Gamma, m, f)$ where $\Gamma$ is a graph with finite vertex set $V=V(\Gamma)$, $f\colon V\to\mathbb{N}_{\geq 2}\cup\left\{\infty\right\}$ and $m\colon E(\Gamma)\to\mathbb{N}_{\geq 2}$ are maps. For every edge $e=\left\{x,y\right\}\in E(\Gamma)$, if $m(e)\neq 2$, then $f(x)=f(y)=2$. 
\item The associated \emph{Dyer group} is defined as follows
\begin{gather*}
D:=\langle V\mid x^{f(x)}, x\in V \text{ if }f(x)\neq\infty, \pi(x,y,m(\left\{x,y\right\}))=\pi(y,x,m(\left\{x,y\right\}))\\
\text{ if }\left\{x,y\right\}\in E(\Gamma)\rangle\,.
\end{gather*}
\item The associated pair $(D,V)$ where $D$ is a Dyer group and $V=V(\Gamma)$ is called a \emph{Dyer system}.
\end{enumerate}
\end{definition}

\subsection{Dyer tools}
We start by recalling several results which were proven by Dyer in \cite{Dyer}. Let $G$ be a group and $g\in G$. We denote the order of $g$ by $o(g)$. If $o(g)$ is finite, then we write $\mathbb{Z}_{o(g)}$ for the cyclic group of cardinality $o(g)$ and if $o(g)$ is infinite, then we write $\mathbb{Z}_{o(g)}$ for the infinite cyclic group.
More generally we use the notation $\mathbb{Z}_n=\mathbb{Z}/n\mathbb{Z}$ if $n$ is a positive integer and $\mathbb{Z}_\infty=\mathbb{Z}$.

Let $(D,V)$ be a Dyer system. By definition, a conjugate of a generator $x\in V$ is called a \emph{reflection}. We define
$$R:=\left\{gxg^{-1}\mid g\in D, x\in V\right\}\,.$$
$R$ is the set of all reflections in $D$. For $\rho\in R$ we define a copy of $\mathbb{Z}_{o(\rho)}$ as $H_\rho=\left\{a[\rho]\mid a\in\mathbb{Z}_{o(\rho)}\right\}$. The set $H_\rho$ is an abelian group whose group operation is defined by $a[\rho]+b[\rho]:=(a+b)[\rho]$. Hence $H_\rho$ is isomorphic to $\mathbb{Z}_{o(\rho)}$. Further, we define
$$M=\bigoplus\limits_{\rho\in R}H_\rho\,.$$
This set is an abelian group with canonical group operation
$\sum a_\rho[\rho]+\sum b_\rho[\rho]=\sum (a_\rho+b_\rho)[\rho]$.
Furthermore, this abelian group is a $D$-module where the structure of the $D$-module is defined for $g\in D$ by
$$g\cdot \sum a_\rho[\rho]:=\sum a_\rho[g\rho g^{-1}]\,.$$

Let $g\in D$. We pick one syllabic representative $(x_1^{a_1}, x_2^{a_2}, \ldots, x_l^{a_l})$ for $g$, that is, a tuple of syllables such that $g=x_1^{a_1}x_2^{a_2}\ldots x_l^{a_l}$. For each $i\in\left\{1,\ldots, l\right\}$ we define a reflection
$$\rho_i:=x_1^{a_1}x_2^{a_2}\ldots x^{a_{i-1}}_{i-1}x_ix_{i-1}^{-a_{i-1}}\ldots x_2^{-a_2}x_1^{-a_1}\,.$$
We set
$$N(g)=\sum_{i=1}^l a_i[\rho_i]\in M\,.$$

For $n\in\mathbb{N}_{\ge 2}\cup\{\infty\}$ and $a\in\mathbb{Z}_n$ we denote by $\lVert a\rVert_n$ the word length of $a$ with respect to the generating set $\left\{1\right\}$.

\begin{theorem}(\cite{Dyer})
\label{DyerToolbar}
Let $(D,V)$ be a Dyer system. Let $g,h\in D$.
\begin{enumerate}
\item $N(g)$ does not depend on the choice of the syllabic representative for $g$.

\item Let $N(g)=\sum_{\rho\in R}a_\rho(g)[\rho]$. Then 
\begin{enumerate}
\item $l_{sy}(g)=\mid\left\{\rho\in R\mid a_\rho(g)\neq 0\right\}\mid$.
\item $l(g)=\sum_{\rho\in R}\lVert a_\rho(g)\rVert_{o(\rho)}$.
\end{enumerate}
\item $N(gh)=N(g)+g\cdot N(h)$.
\end{enumerate}
\end{theorem}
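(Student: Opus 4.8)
\textit{Plan.} The plan is to obtain (1) and (3) at once by realizing $g\mapsto N(g)$ as a crossed homomorphism (a splitting of a semidirect product), and then to deduce the length formulas in (2) from the cocycle identity (3) by a peeling argument. Throughout I will use Dyer's foundational facts that $o(x)=f(x)$ for every $x\in V$ and that each standard parabolic $D_{\{x,y\}}$ is the Dyer group of the induced sub-Dyer-graph.

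\textit{Parts (1) and (3).} Let $G=M\rtimes D$ be the semidirect product for the given $D$-action on $M$, with product $(m_1,d_1)(m_2,d_2)=(m_1+d_1\cdot m_2,\,d_1d_2)$. I would define $V\to G$ by $x\mapsto(1[x],x)$, where $1[x]$ is the distinguished generator of $H_x\cong\mathbb{Z}_{o(x)}$, and check that this extends to a homomorphism $\Phi\colon D\to G$ by verifying the defining relators. For $x^{f(x)}$ (when $f(x)<\infty$): since $x^j\cdot 1[x]=1[x]$, one gets $(1[x],x)^{f(x)}=(f(x)\cdot 1[x],\,x^{f(x)})=(0,1)$, using $o(x)=f(x)$ and $x^{f(x)}=1$ in $D$. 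For $\pi(x,y,m)=\pi(y,x,m)$: work inside $\langle\Phi(x),\Phi(y)\rangle$, which surjects onto $D_{\{x,y\}}$; by the Dyer-graph condition this is an abelian product of cyclic groups (with $x,y$ commuting) if $m=2$, and a finite dihedral Coxeter group if $m\ge 3$. In the first case both sides of the relator have $M$-coordinate $1[x]+1[y]$; in the second case both have $M$-coordinate $\sum_{\rho}1[\rho]$ taken over the reflections of $D_{\{x,y\}}$, by the classical fact that the longest element of a dihedral group inverts all of its (order-two) reflections. Hence $\Phi$ is well defined; since its composite with $G\to D$ is the identity, $\Phi(g)=(N'(g),g)$ for a well-defined $N'\colon D\to M$ with $N'(gh)=N'(g)+g\cdot N'(h)$ by the multiplication in $G$. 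Finally, for any syllabic representative $(x_1^{a_1},\dots,x_l^{a_l})$ of $g$ one has $\Phi(x_i)^{a_i}=(a_i[x_i],x_i^{a_i})$, so reading off the $M$-coordinate of $\Phi(g)=\prod_i\Phi(x_i)^{a_i}$ gives $\sum_i a_i[\rho_i]$; this shows the sum does not depend on the representative (part (1)) and equals $N'(g)$, so $N'=N$ and (3) holds.

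\textit{Part (2).} The inequalities $l_{sy}(g)\ge|\{\rho:a_\rho(g)\ne 0\}|$ and $l(g)\ge\sum_\rho\lVert a_\rho(g)\rVert_{o(\rho)}$ follow at once by applying $N(g)=\sum_i a_i[\rho_i]$ to a minimal syllabic, resp.\ minimal word (grouped into syllables), expression of $g$, using the triangle inequality in each $\mathbb{Z}_{o(\rho)}$. For the reverse inequalities I would prove a \emph{peeling lemma}: if $(x_1^{a_1},\dots,x_l^{a_l})$ is a minimal syllabic representative of $g$, then $a_1\not\equiv0\pmod{o(x_1)}$, the reflection $x_1$ does not lie in the support of $N(x_1^{-a_1}g)$, and $(x_2^{a_2},\dots,x_l^{a_l})$ is a minimal syllabic representative of $x_1^{-a_1}g$. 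The lemma is elementary: if $x_1$ were in the support of $N(x_1^{-a_1}g)$, then some $j\ge 2$ would have $j$-th associated reflection equal to $x_1$, i.e.\ $hx_jh^{-1}=x_1$ with $h=x_2^{a_2}\cdots x_{j-1}^{a_{j-1}}$; substituting $x_j=h^{-1}x_1h$ into $g=x_1^{a_1}x_2^{a_2}\cdots x_l^{a_l}$ and sliding the resulting factor $x_1^{a_j}$ to the front yields an expression with at most $l-1$ syllables, contradicting minimality. Granted the lemma, (3) gives $N(x_1^{-a_1}g)=x_1^{-a_1}\cdot(N(g)-a_1[x_1])$, so the support of $N(x_1^{-a_1}g)$ is the support of $N(g)$ with $x_1$ removed, transported by $\rho\mapsto x_1^{-a_1}\rho x_1^{a_1}$ with coefficients unchanged; induction on $l_{sy}(g)$ then yields $l_{sy}(g)=|\{\rho:a_\rho(g)\ne 0\}|$, shows that the reflections of a minimal syllabic representative are pairwise distinct with $a_i=a_{\rho_i}(g)$, and — expanding each of its syllables by a shortest word for $x_i^{a_i}$ — exhibits a word for $g$ of length $\sum_\rho\lVert a_\rho(g)\rVert_{o(\rho)}$, giving $l(g)\le\sum_\rho\lVert a_\rho(g)\rVert_{o(\rho)}$.

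\textit{Main obstacle.} The one place real computation enters is the verification that $\Phi$ respects the relators $\pi(x,y,m)=\pi(y,x,m)$: although it is only a rank-two check, it relies on the explicit structure of $D_{\{x,y\}}$ and, for $m\ge 3$, on the combinatorics of reflections in finite dihedral Coxeter groups. Everything else — the existence of $N'$, the cocycle identity, the peeling lemma, and the deduction of (2) — is formal once $\Phi$ is available, and the foundational facts I invoke are supplied by Dyer's work.
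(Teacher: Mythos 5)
The paper does not prove this statement at all: Theorem \ref{DyerToolbar} is quoted verbatim from Dyer's work (the citation \cite{Dyer}) and is used as a black box, so there is no in-paper argument to compare against. Judged on its own, your reconstruction is sound and follows the standard route for results of this kind (and essentially the route of the original source): realize $N$ as the $M$-component of a section $D\to M\rtimes D$, which gives well-definedness (1) and the cocycle identity (3) simultaneously, then get the length formulas (2) by a peeling induction on a reduced syllabic representative. Your relator checks are right: for $x^{f(x)}$ you in fact do not even need the full equality $o(x)=f(x)$, since $x^{f(x)}=1$ in $D$ already forces $o(x)\mid f(x)$, which is all the computation $(1[x],x)^{f(x)}=(f(x)[x],1)=(0,1)$ uses; for $m\ge 2$ the multiset identity for the reflection sequences of $\pi(x,y,m)$ and $\pi(y,x,m)$ can even be verified in the abstract dihedral (resp.\ abelian) group and pushed forward along the homomorphism onto $D_{\{x,y\}}$, so injectivity of the parabolic is not strictly needed there. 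In part (2), your peeling lemma actually proves the stronger fact that no $\rho'_j$ ($j\ge 2$) equals $x_1$, and it is this stronger form, combined with the inductive hypothesis for $x_1^{-a_1}g$, that justifies your sentence about the support of $N(x_1^{-a_1}g)$ being the transported support of $N(g)$ with $x_1$ removed (as phrased, that sentence slightly presupposes $a_{x_1}(g)=a_1$, which is part of what the induction establishes); with that reading the induction correctly yields distinctness of the $\rho_i$, $a_{\rho_i}(g)=a_i$, and hence both (2a) and, after replacing each syllable by a shortest word for $x_i^{a_i}$, the upper bound matching your easy lower bound in (2b). So the proposal is correct as a sketch; its only reliance on outside input (the structure of rank-two parabolics, $o(x)=f(x)$) is on facts genuinely available in \cite{Dyer} and \cite{Paris}, and even these can be thinned out as indicated.
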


Let $g\in D$.
A syllabic representative $(x_1^{a_1},x_2^{a_2},\dots,x_l^{a_l})$ for $g$ is called \emph{reduced} if $l=l_{sy}(g)$.
The following is a direct consequence of part (2) of Theorem \ref{DyerToolbar} and it will be often used hereafter.

\begin{corollary}
\label{DyerConsequence}
Let $g\in D$ and $(x_1^{a_1},x_2^{a_2},\dots,x_l^{a_l})$ be a reduced syllabic representative for $g$.
Then
$$l(g)=\|a_1\|_{o(x_1)}+\|a_2\|_{o(x_2)}+\ldots+\|a_l\|_{o(x_l)}\,. $$
\end{corollary}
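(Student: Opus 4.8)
The statement to prove is Corollary~\ref{DyerConsequence}, which asserts that for a \emph{reduced} syllabic representative $(x_1^{a_1},\dots,x_l^{a_l})$ of $g\in D$, the word length is $l(g)=\sum_{i=1}^l\|a_i\|_{o(x_i)}$.

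\bigskip

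\textbf{Plan of proof.} The idea is to combine the two formulas in part~(2) of Theorem~\ref{DyerToolbar}. Write $N(g)=\sum_{\rho\in R}a_\rho(g)[\rho]$, and let $\rho_1,\dots,\rho_l$ be the reflections associated to the chosen reduced syllabic representative, so that $N(g)=\sum_{i=1}^l a_i[\rho_i]$. By part~(2)(b), $l(g)=\sum_{\rho\in R}\|a_\rho(g)\|_{o(\rho)}$. The content of the corollary is therefore that $\sum_{\rho\in R}\|a_\rho(g)\|_{o(\rho)}=\sum_{i=1}^l\|a_i\|_{o(x_i)}$, which will follow once we know two things: first, that the reflections $\rho_1,\dots,\rho_l$ are pairwise distinct, so that the expression $\sum_i a_i[\rho_i]$ is already the canonical decomposition of $N(g)$ in $M=\bigoplus_{\rho\in R}H_\rho$ with $a_{\rho_i}(g)=a_i$ and $a_\rho(g)=0$ for $\rho\notin\{\rho_1,\dots,\rho_l\}$; and second, that $o(\rho_i)=o(x_i)$, since each $\rho_i$ is a conjugate of $x_i$ and conjugation preserves order.

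\bigskip

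The distinctness of the $\rho_i$ is exactly where reducedness enters. Suppose two of them coincide, say $\rho_i=\rho_j$ with $i<j$; then in $M$ the coefficient $a_\rho(g)$ at that common reflection is $a_i+a_j\bmod o(x_i)$, and more importantly the number of reflections with nonzero coefficient would be strictly less than $l$ (or, if $a_i+a_j\not\equiv 0$, still at most $l-1$ distinct reflections appear). By part~(2)(a) of Theorem~\ref{DyerToolbar}, $l_{sy}(g)=|\{\rho\in R\mid a_\rho(g)\neq 0\}|\le l-1<l$, contradicting $l=l_{sy}(g)$, which is the hypothesis that the representative is reduced. Hence the $\rho_i$ are pairwise distinct. (One should check the edge case where some $a_i\equiv 0\bmod o(x_i)$: a genuine syllabic representative has all $a_i$ representing nontrivial elements of $\mathbb{Z}_{o(x_i)}$, i.e. $\|a_i\|_{o(x_i)}>0$, so this does not occur; alternatively, such a syllable could be deleted, again contradicting reducedness.)

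\bigskip

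With distinctness in hand, $a_{\rho_i}(g)=a_i$ for each $i$ and all other coefficients vanish, so
$$l(g)=\sum_{\rho\in R}\|a_\rho(g)\|_{o(\rho)}=\sum_{i=1}^l\|a_i\|_{o(\rho_i)}=\sum_{i=1}^l\|a_i\|_{o(x_i)}\,,$$
where the last equality uses $o(\rho_i)=o(x_i)$ because $\rho_i$ is conjugate to $x_i$. The main obstacle is really the distinctness argument, i.e.\ correctly extracting from $l=l_{sy}(g)$ and part~(2)(a) that no collision among the $\rho_i$ can occur; once that combinatorial observation is made, the rest is a direct substitution into the two formulas of Theorem~\ref{DyerToolbar}.
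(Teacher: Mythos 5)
Your proof is correct and follows essentially the same route as the paper: introduce the reflections $\rho_i$ attached to the reduced syllabic representative, use Theorem~\ref{DyerToolbar}\,(2--a) together with $l=l_{sy}(g)$ to conclude the $\rho_i$ are pairwise distinct, then apply (2--b) and the fact that $o(\rho_i)=o(x_i)$ by conjugacy. Your write-up merely makes explicit the collision argument that the paper states in one line.
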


\begin{proof}
For each $i\in\left\{1,\ldots,l\right\}$ we set
$$\rho_i=x_1^{a_1}x_2^{a_2}\ldots x_{i-1}^{a_{i-1}}x_ix_{i-1}^{-a_{i-1}}\ldots x_2^{-a_2}x_1^{-a_1}\,.$$
Then
$$N(g)=\sum_{i=1}^la_i[\rho_i]\,.$$
Since $l=l_{sy}(g)$, by Theorem \ref{DyerToolbar}\,(2--a) we have $\rho_i\neq\rho_j$ for $i\neq j$.
By Theorem \ref{DyerToolbar}\,(2--b) it follows that
$$l(g)=\|a_1\|_{o(\rho_1)}+\|a_2\|_{o(\rho_2)}+\ldots+\|a_l\|_{o(\rho_l)} =\|a_1\|_{o(x_1)}+\|a_2\|_{o(x_2)}+\ldots+\|a_l\|_{o(x_l)}\,.$$
\end{proof}

\subsection{Standard parabolic subgroups}
Let $(D,V)$ be a Dyer system. 
For any subset $X\subseteq V$, we denote the subgroup generated by the set $X$ by $D_X\subseteq D$. 
$D_X$ is called the \emph{standard parabolic subgroup} generated by $X$. 

Let $(\Gamma,m,f)$ be the Dyer graph associated with $(D,V)$.
We denote by $\Gamma_X$ the full subgraph of $\Gamma$ spanned by $X$, by $m_X$ the restriction of $m$ to $E(\Gamma_X)$, and by $f_X$ the restriction of $f$ to $V(\Gamma_X)=X$.
Then $(\Gamma_X,m_X,f_X)$ is a Dyer graph and we know by \cite{Dyer} that $(D_X,X)$ is the Dyer system associated with $(\Gamma_X,m_X,f_X)$ (see also \cite[Proposition 2.7]{Paris}).

\begin{lemma}
\label{lengthparabolic}
Let $(D,V)$ be a Dyer system. 
Let $D_X$ be a standard parabolic subgroup of $D$. 
Then for any $g\in D_X$ we have $l_X(g)=l_V(g)$.
\end{lemma}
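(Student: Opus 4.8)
The plan is to compare word lengths with respect to the two generating sets $X\subseteq V$ directly via the combinatorial data supplied by Dyer's theorem (Theorem~\ref{DyerToolbar}) and its Corollary~\ref{DyerConsequence}. Since $X\subseteq V$, every word in the alphabet $X\cup X^{-1}$ is also a word in $V\cup V^{-1}$, so the inequality $l_V(g)\le l_X(g)$ is immediate for every $g\in D_X$. The content of the lemma is the reverse inequality $l_X(g)\le l_V(g)$, i.e.\ that the ambient group $D$ never offers a shorter expression than the one already available inside $D_X$.

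For the reverse inequality I would argue as follows. Fix $g\in D_X$ and choose a reduced syllabic representative $(x_1^{a_1},\dots,x_l^{a_l})$ for $g$ \emph{with respect to the generating set $X$}, so $l=l_{sy,X}(g)$ and, by Corollary~\ref{DyerConsequence} applied inside the Dyer system $(D_X,X)$, we have $l_X(g)=\|a_1\|_{o(x_1)}+\dots+\|a_l\|_{o(x_l)}$. Because $(D_X,X)$ is itself the Dyer system associated with the full subgraph $(\Gamma_X,m_X,f_X)$ (quoted from \cite{Dyer}), the order of $x_i$ computed in $D_X$ agrees with its order computed in $D$, namely $f(x_i)$; so these quantities are unambiguous. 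Now view the same tuple $(x_1^{a_1},\dots,x_l^{a_l})$ as a syllabic representative of $g$ inside $D$. The key point I must establish is that it is still reduced in $D$, i.e.\ that $l_{sy,V}(g)=l$. Granting this, Corollary~\ref{DyerConsequence} applied in $(D,V)$ gives $l_V(g)=\|a_1\|_{o(x_1)}+\dots+\|a_l\|_{o(x_l)}$, which is exactly $l_X(g)$, finishing the proof.

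So the crux reduces to: a syllabic representative of $g\in D_X$ that is reduced in $(D_X,X)$ remains reduced in $(D,V)$. Here is where Dyer's $N$-function does the work. Writing $N(g)=\sum_{i=1}^l a_i[\rho_i]$ with $\rho_i=x_1^{a_1}\cdots x_{i-1}^{a_{i-1}}\,x_i\,x_{i-1}^{-a_{i-1}}\cdots x_1^{-a_1}$, note that $N(g)$ is computed by the \emph{same formula} whether we regard $g$ as living in $D_X$ or in $D$, and that $l_{sy}(g)$ is, by Theorem~\ref{DyerToolbar}(2--a), the number of $\rho$ with $a_\rho(g)\neq 0$ — a quantity intrinsic to the element $N(g)\in M$ and independent of the ambient system. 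Two subtleties need care: first, that $N(g)$ itself does not depend on whether we unwind it in $D_X$ or $D$ — this follows because the reflections $\rho_i$ are words in $X$ and the map $M(D_X)\hookrightarrow M(D)$ induced by $D_X\hookrightarrow D$ on reflection sets is injective, so distinct reflections in $D_X$ stay distinct in $D$; second, that the coefficients $a_\rho(g)$ are the same. Since $l_{sy,X}(g)=l$ means $N(g)$ has exactly $l$ nonzero coefficients, the identical count gives $l_{sy,V}(g)=l$, so the representative is reduced in $(D,V)$ as well.

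The main obstacle, then, is purely the bookkeeping around the module $M$: one must check that passing from the Dyer system $(D_X,X)$ to $(D,V)$ induces a well-behaved (in particular injective) inclusion on reflection sets and hence on the groups $M$, so that $N(g)$, its support, and its coefficients are genuinely ambient-independent. Everything else — the two applications of Corollary~\ref{DyerConsequence} and the trivial direction $l_V\le l_X$ — is then immediate.
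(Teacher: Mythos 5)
Your proposal is correct, and the bookkeeping you flag as the ``main obstacle'' does go through: since $D_X$ is the Dyer group of the full subgraph $(\Gamma_X,m_X,f_X)$ and embeds in $D$, one has $R(D_X)\subseteq R(D)$, each reflection $\rho\in R(D_X)$ has the same order in $D_X$ as in $D$, so $M(D_X)=\bigoplus_{\rho\in R(D_X)}H_\rho$ sits inside $M(D)$ as a support-preserving direct summand, and computing $N(g)$ from a syllabic representative with letters in $X$ gives the same element under this identification. Hence the count of nonzero coefficients, i.e.\ $l_{sy}$, agrees in the two systems, an $X$-reduced representative stays $V$-reduced, and two applications of Corollary~\ref{DyerConsequence} finish the argument. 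This is a genuinely different route from the paper's: the paper starts from a representative that is reduced in $(D,V)$ and invokes \cite[Lemma 2.5]{Paris} to conclude that all its letters already lie in $X$, which immediately yields $l_X(g)\le l_V(g)$ from Corollary~\ref{DyerConsequence}; you instead start from a representative reduced in $(D_X,X)$ and prove it remains reduced upstairs. The paper's version is shorter because it outsources the key combinatorial fact to the cited lemma; yours is self-contained relative to Theorem~\ref{DyerToolbar} at the cost of verifying the compatibility of Dyer's module $M$ with parabolic inclusion, and in effect it reproves the relevant consequence of that cited lemma. Both hinge on the same length formula, so the approaches are close in spirit even though the reductions run in opposite directions.
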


\begin{proof}
Let $g\in D_X$.
Let $(x_1^{a_1},x_2^{a_2},\ldots,x_l^{a_l})$ be a reduced syllabic representative for $g$.
We know by \cite[Lemma 2.5]{Paris} that $x_1,x_2,\dots, x_l\in X$, hence, by Corollary \ref{DyerConsequence},
$$l_V(g)=\|a_1\|_{o(x_1)}+\|a_2\|_{o(x_2)}+\ldots+\|a_l\|_{o(x_l)}\le l_X (g)\,.$$
It is clear that we also have $l_X(g)\ge l_V(g)$, thus $l_X(g)=l_V(g)$.
\end{proof}

\begin{proposition}
\label{minimallength}
Let $(D,V)$ be a Dyer system and $D_X$ be a standard parabolic subgroup. 
Then for every $g\in G$
\begin{enumerate}
\item there exists a unique $g_0\in gD_X$ such that
$$l_{sy}(g_0h)=l_{sy}(g_0)+l_{sy}(h)\text{ and } 
l(g_0h)=l(g_0)+l(h)$$ for all $h\in D_X$.
\item there exists a unique $g'_0\in D_Xg$ such that
$$l_{sy}(hg'_0)=l_{sy}(h)+l_{sy}(g'_0)\text{ and } 
l(hg'_0)=l(h)+l(g'_0)$$ for all $h\in D_X$.
\end{enumerate}
\end{proposition}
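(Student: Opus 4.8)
The plan is to run everything through Dyer's function $N\colon D\to M$ of Theorem~\ref{DyerToolbar}, writing $N(g)=\sum_{\rho\in R}a_\rho(g)[\rho]$ and using the cocycle identity $N(gh)=N(g)+g\cdot N(h)$ together with its immediate consequence $N(g^{-1})=-g^{-1}\cdot N(g)$ (apply the cocycle identity to $gg^{-1}=1$). Let $R_X:=\{kxk^{-1}\mid k\in D_X,\ x\in X\}$ be the set of reflections of the Dyer system $(D_X,X)$; note $R_X\subseteq D_X$ tautologically. Call $g_0\in D$ \emph{$X$-reduced} if $\operatorname{supp}N(g_0^{-1})\cap R_X=\varnothing$, equivalently (since $\operatorname{supp}N(g_0^{-1})=g_0^{-1}\operatorname{supp}N(g_0)\,g_0$) if $\operatorname{supp}N(g_0)\cap g_0R_Xg_0^{-1}=\varnothing$. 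I will show that each left coset $gD_X$ contains exactly one $X$-reduced element, that this element is the $g_0$ of part~(1), and then part~(2) follows by applying part~(1) to the coset $g^{-1}D_X$ and taking inverses throughout (all lengths are invariant under $x\mapsto x^{-1}$, and $(gD_X)^{-1}=D_Xg^{-1}$).

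The first step is to prove the additivity property: if $g_0$ is $X$-reduced, then $l_{sy}(g_0h)=l_{sy}(g_0)+l_{sy}(h)$ and $l(g_0h)=l(g_0)+l(h)$ for all $h\in D_X$. Fix a reduced syllabic representative $(x_1^{a_1},\dots,x_l^{a_l})$ of $h$; by \cite[Lemma 2.5]{Paris} all $x_i$ lie in $X$, so the reflections $\sigma_i=x_1^{a_1}\cdots x_{i-1}^{a_{i-1}}x_ix_{i-1}^{-a_{i-1}}\cdots x_1^{-a_1}$ lie in $R_X$ and, by Theorem~\ref{DyerToolbar}(2--a), are pairwise distinct with $N(h)=\sum_{i=1}^l a_i[\sigma_i]$. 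Hence $N(g_0h)=N(g_0)+\sum_{i=1}^l a_i[g_0\sigma_ig_0^{-1}]$, and $X$-reducedness says exactly that $\operatorname{supp}N(g_0)$ is disjoint from $\{g_0\sigma_1g_0^{-1},\dots,g_0\sigma_lg_0^{-1}\}$. Thus $\operatorname{supp}N(g_0h)$ is the disjoint union of these two sets, with all coefficients unchanged; taking cardinalities and Theorem~\ref{DyerToolbar}(2--a) gives the $l_{sy}$-identity. For the $l$-identity, apply Theorem~\ref{DyerToolbar}(2--b): $l(g_0h)=\sum_{\rho\in\operatorname{supp}N(g_0)}\|a_\rho(g_0)\|_{o(\rho)}+\sum_{i=1}^l\|a_i\|_{o(g_0\sigma_ig_0^{-1})}=l(g_0)+\sum_{i=1}^l\|a_i\|_{o(x_i)}=l(g_0)+l(h)$, where I used $o(g_0\sigma_ig_0^{-1})=o(\sigma_i)=o(x_i)$ (conjugates have equal order, $\sigma_i$ being conjugate to $x_i$) and Corollary~\ref{DyerConsequence} applied to the reduced representative of $h$.

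Uniqueness is then formal: if $g_0$ and $g_1=g_0h$ (with $h\in D_X$) are both $X$-reduced, then additivity for $g_0$ applied to $h$ gives $l_{sy}(g_1)=l_{sy}(g_0)+l_{sy}(h)$, while additivity for $g_1$ applied to $h^{-1}\in D_X$ gives $l_{sy}(g_0)=l_{sy}(g_1)+l_{sy}(h)$; adding yields $l_{sy}(h)=0$, so $g_0=g_1$. For existence, choose $g_0\in gD_X$ of minimal syllable length; I claim $g_0$ is $X$-reduced. If not, take a reduced syllabic representative $(y_1^{b_1},\dots,y_m^{b_m})$ of $g_0^{-1}$, so $\operatorname{supp}N(g_0^{-1})=\{\tau_1,\dots,\tau_m\}$ with $\tau_i=p_{i-1}y_ip_{i-1}^{-1}$ and $p_j=y_1^{b_1}\cdots y_j^{b_j}$ (Theorem~\ref{DyerToolbar}(2--a) again), and by assumption some $\tau_i\in R_X\subseteq D_X$. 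Then $\tau_i^{-b_i}\in D_X$, and a direct computation gives $\tau_i^{-b_i}g_0^{-1}=y_1^{b_1}\cdots y_{i-1}^{b_{i-1}}y_{i+1}^{b_{i+1}}\cdots y_m^{b_m}$, a syllabic word of length $m-1$; hence $l_{sy}(\tau_i^{-b_i}g_0^{-1})<m=l_{sy}(g_0^{-1})=l_{sy}(g_0)$, and since $\tau_i^{-b_i}g_0^{-1}\in D_Xg_0^{-1}$, i.e.\ $(\tau_i^{-b_i}g_0^{-1})^{-1}=g_0\tau_i^{b_i}\in g_0D_X=gD_X$, this contradicts the minimality of $l_{sy}(g_0)$. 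So $g_0$ is $X$-reduced, and the additivity property shows it is the element required by part~(1).

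The delicate point is the existence step, and its crux is the small observation that a reflection $\tau_i$ recording the $i$-th syllable of a reduced word for $g_0^{-1}$, if it happens to lie in $R_X$, is automatically an element of $D_X$, so that its power $\tau_i^{-b_i}$ is a legitimate "deletion move" staying in the coset $D_Xg_0^{-1}$ while cancelling that syllable. Once this is in place, the rest is bookkeeping with $N$; the only spot where care is genuinely needed is the order matching $o(g_0\sigma_ig_0^{-1})=o(x_i)$ in the $l$-additivity computation, since it is what lets Corollary~\ref{DyerConsequence} be invoked for $h$ rather than for $g_0h$.
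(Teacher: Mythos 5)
Your proof is correct, but it takes a genuinely different route from the paper. The paper disposes of the syllabic-length statement by citing \cite[Proposition 2.8]{Paris} outright, and then obtains word-length additivity by observing that the concatenation of reduced syllabic representatives of $g_0$ and $h$ is a reduced representative of $g_0h$, so that Corollary \ref{DyerConsequence} applies. You instead reprove the whole statement from Dyer's cocycle $N$: your notion of $X$-reducedness ($\operatorname{supp}N(g_0^{-1})\cap R_X=\varnothing$), the deletion move $\tau_i^{-b_i}g_0^{-1}$ that cancels a syllable while staying in the coset, and the disjoint-support computation of $N(g_0h)=N(g_0)+g_0\cdot N(h)$ together give existence, uniqueness, and both additivity statements in one pass. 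I checked the delicate points: the identity $\operatorname{supp}N(g_0^{-1})=g_0^{-1}\operatorname{supp}N(g_0)g_0$ does follow from the cocycle relation applied to $gg^{-1}=1$; the distinctness of the $\sigma_i$ (resp.\ $\tau_i$) is exactly Theorem \ref{DyerToolbar}(2--a) applied to a reduced representative; the order-matching $o(g_0\sigma_ig_0^{-1})=o(x_i)$ legitimizes the coefficient bookkeeping; and your uniqueness computation uses only the additivity property, so it yields uniqueness of the element satisfying the conclusion, not merely of the $X$-reduced element. The trade-off is clear: the paper's proof is two lines but rests on an external result, while yours is self-contained modulo Theorem \ref{DyerToolbar} and \cite[Lemma 2.5]{Paris} (needed to place the $\sigma_i$ in $R_X$), and in effect supplies a proof of the cited \cite[Proposition 2.8]{Paris} as a by-product.
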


\begin{proof}
The statements regarding syllabic length were proved in \cite[Proposition 2.8]{Paris}.
Hence we know that there exists a unique $g_0\in gD_X$ such that for all $h\in D_X$ we have
$$l_{sy}(g_0h)=l_{sy}(g_0)+l_{sy}(h)\,.$$
Let $h\in D_X$.
Let $(x_1^{a_1},\ldots,x_p^{a_p})$ be a reduced syllabic representative for $g_0$ and $(y_1^{b_1},\ldots,y_q^{b_q})$ be a reduced syllabic representative for $h$.
We know from the above that $(x_1^{a_1},\ldots,x_p^{a_p},y_1^{b_1},\ldots,y_q^{b_q})$ is a reduced syllabic representative for $g_0h$, hence, by Corollary \ref{DyerConsequence},
$$l(g_0h)=\|a_1\|_{o(x_1)}+\ldots+\|a_p\|_{o(x_p)}+\|b_1\|_{o(y_1)}+\ldots+\|b_q\|_{o(y_q)}=l(g_0)+l(h)\,.$$

The proof of part (2) is the same as for part (1).
\end{proof}

\begin{corollary}
\label{Dyeradmissable}
Let $(\Gamma,m,f)$ be a Dyer graph and $(D,V)$ be the associated Dyer system. 
Every pair $(D_X,X)$ where $D_X$ is a standard parabolic subgroup is admissable.
\end{corollary}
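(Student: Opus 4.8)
The plan is to produce an explicit transversal for $D_X$ in $D$ and then read off the length-additivity condition from the results already in hand. For each right coset $D_Xg$ I would take the distinguished representative $g'_0\in D_Xg$ furnished by Proposition~\ref{minimallength}(2), and let $U$ be the set of all these representatives, one for each right coset of $D_X$. The uniqueness clause in Proposition~\ref{minimallength}(2) guarantees that $U$ meets each coset $D_Xg$ in exactly one point, so $U$ is indeed a transversal for $D_X$ in $D$. Moreover, applying Proposition~\ref{minimallength}(2) with $g=1$ shows that the distinguished representative of the coset $D_X=D_X\cdot 1$ is the identity: $1\in D_X\cdot 1$ and $l(h\cdot 1)=l(h)+l(1)$ for all $h\in D_X$, so uniqueness forces $g'_0=1$. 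Thus $U$ satisfies the normalization built into the definition of admissibility.

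Next I would verify the defining identity. Suppose $g=hu$ with $h\in D_X$ and $u\in U$. Then $D_Xg=D_Xu$, and since $u$ is the unique element of $U$ in this coset, $u$ is precisely the distinguished representative of $D_Xg$ given by Proposition~\ref{minimallength}(2). Applying that proposition with this $u$ playing the role of $g'_0$ gives
$$l_V(g)=l_V(hu)=l_V(h)+l_V(u)\,.$$
Finally, since $h\in D_X$, Lemma~\ref{lengthparabolic} yields $l_X(h)=l_V(h)$, so $l_V(g)=l_X(h)+l_V(u)$, which is exactly the condition for $(D_X,X)$ to be admissible in $(D,V)$.

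I do not expect a genuine obstacle: the corollary is essentially a repackaging of Proposition~\ref{minimallength}(2) together with Lemma~\ref{lengthparabolic}. The only points needing a little care are checking that the distinguished coset representatives really do assemble into a single transversal — which is precisely the existence-and-uniqueness statement of Proposition~\ref{minimallength}(2) — and confirming that the identity serves as the representative of $D_X$, so that the conventions in the definition of admissibility are respected.
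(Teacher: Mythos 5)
Your proof is correct and takes essentially the same route as the paper: a transversal of distinguished minimal coset representatives supplied by Proposition~\ref{minimallength}, with Lemma~\ref{lengthparabolic} converting $l_V(h)$ into $l_X(h)$. The only (harmless) difference is that you invoke part (2) of the proposition, for right cosets $D_Xg$, which actually matches the convention $g=hu$ in the definition of admissibility more literally than the paper's own sketch, which cites the left-coset version.
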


\begin{proof}
For a standard parabolic subgroup $D_X$ and an element $g\in D$ there exists a unique $g_0\in gD_X$ such that $l(g_0h)=l(g_0)+l(h)$ for all $h\in D_X$. 
We take these minimal elements as a transversal. 
Lemma \ref{lengthparabolic} and Proposition \ref{minimallength} show that this transversal is admissible.
\end{proof}

\begin{corollary}
\label{amalgamdecomposition}
Let $(\Gamma,m,f)$ be a Dyer graph. 
Let $v\in V(\Gamma)$. 
If $st(v)\neq V(\Gamma)$, then 
$$D=D_{V-\left\{v\right\}}*_{D_{lk(v)}}D_{st(v)}\,,$$
and
$$\frac{1}{\mathcal{G}_{(D, V)}(t)}=\frac{1}{\mathcal{G}_{(D_{V-\left\{v\right\}}, V-\left\{v\right\})}(t)}+\frac{1}{\mathcal{G}_{(D_{st(v)}, st(v))}(t)}-\frac{1}{\mathcal{G}_{(D_{lk(v)},  lk(v))}(t)}\,.$$
\end{corollary}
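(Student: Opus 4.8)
The plan is to prove this in two parts: first the algebraic amalgam decomposition $D = D_{V-\{v\}} *_{D_{lk(v)}} D_{st(v)}$, and then deduce the growth series formula from Proposition \ref{admissible}. For the decomposition, I would start from the presentation of $D$. Since $st(v) \neq V(\Gamma)$, the vertex $v$ is connected by an edge only to vertices in $lk(v)$, so every relator of $D$ involving $v$ — namely $x^{f(v)}$ (if $f(v) \neq \infty$) and the braid-type relations $\pi(v,y,m(\{v,y\})) = \pi(y,v,m(\{v,y\}))$ for $y \in lk(v)$ — only involves $v$ together with letters of $st(v)$. Meanwhile the generators $V - \{v\}$ together with all relators not involving $v$ present $D_{V-\{v\}}$, and $st(v)$ together with the relators supported on $st(v)$ present $D_{st(v)}$. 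A Tietze-style/van Kampen argument on presentations then shows that $D$ is the pushout of $D_{V-\{v\}} \hookleftarrow D_{lk(v)} \hookrightarrow D_{st(v)}$; here I would invoke the fact, already cited from \cite{Dyer} (see also \cite[Proposition 2.7]{Paris}), that each $D_X$ genuinely embeds in $D$ as the Dyer group on $\Gamma_X$, so that the amalgamated product is formed over honest subgroups and the natural map from the amalgam to $D$ is an isomorphism.

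For the growth series formula, I would apply Proposition \ref{admissible} with $H = D_{V-\{v\}}$ (generated by $S = V - \{v\}$), $K = D_{st(v)}$ (generated by $T = st(v)$), and $L = D_{lk(v)}$ (generated by $R = lk(v)$). The hypothesis to check is that $(D_{lk(v)}, lk(v))$ is admissible both in $(D_{V-\{v\}}, V-\{v\})$ and in $(D_{st(v)}, st(v))$. But $lk(v) \subseteq V - \{v\}$ and $lk(v) \subseteq st(v)$, and $D_{lk(v)}$ is a standard parabolic subgroup of both $D_{V-\{v\}}$ and $D_{st(v)}$ — each of which is itself a Dyer group by the structure theorem cited above — so Corollary \ref{Dyeradmissable} applied to these smaller Dyer systems gives exactly the required admissibility. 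Then Proposition \ref{admissible} yields
$$\frac{1}{\mathcal{G}_{(D,V)}(t)} = \frac{1}{\mathcal{G}_{(D_{V-\{v\}}, V-\{v\})}(t)} + \frac{1}{\mathcal{G}_{(D_{st(v)}, st(v))}(t)} - \frac{1}{\mathcal{G}_{(D_{lk(v)}, lk(v))}(t)}\,,$$
which is the claim. (Here I also use Lemma \ref{lengthparabolic} implicitly to know that the word lengths appearing in the growth series of the parabolic subgroups agree whether computed with respect to the small generating set or the large one, so that all three series in the formula are unambiguous.)

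The main obstacle is the first part: verifying that the natural homomorphism $D_{V-\{v\}} *_{D_{lk(v)}} D_{st(v)} \to D$ is an isomorphism, rather than just a surjection. Surjectivity is immediate since $(V - \{v\}) \cup st(v) = V$ generates $D$. Injectivity is where one must be careful, and the clean way is to build an inverse: the presentation of $D$ has generating set $V$ and a relator set that partitions (after accounting for overlap on $st(v)$) into relators of $D_{V-\{v\}}$ and relators of $D_{st(v)}$, so the universal property of the amalgamated product — using that $D_{lk(v)}$ maps compatibly into both factors — produces a homomorphism $D \to D_{V-\{v\}} *_{D_{lk(v)}} D_{st(v)}$ inverse to the natural map. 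The only subtlety is confirming no relator of $D$ straddles $\{v\}$ and $V - st(v)$, which is precisely guaranteed by the edge structure: $v$ has no edges to $V - st(v)$. With that observation the presentation manipulation is routine.
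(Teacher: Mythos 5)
Your proposal is correct and follows essentially the same route as the paper: the amalgam decomposition is obtained by comparing the presentation of $D$ with the canonical presentation of the amalgamated product (using that standard parabolic subgroups embed as Dyer groups on the full subgraphs), and the growth series formula then follows from Proposition \ref{admissible} together with Corollary \ref{Dyeradmissable}. Your extra care in applying Corollary \ref{Dyeradmissable} to the smaller Dyer systems $(D_{V-\{v\}},V-\{v\})$ and $(D_{st(v)},st(v))$, so that $(D_{lk(v)},lk(v))$ is admissible in each factor rather than merely in $(D,V)$, is exactly the intended reading of the paper's terser argument.
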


\begin{proof}
The proof of the equality $D=D_{V-\left\{v\right\}}*_{D_{lk(v)}}D_{st(v)}$ follows by analyzing the presentation of $D$ and the canonical presentation of the amalgam.
Hence by Corollary \ref{Dyeradmissable} and Proposition \ref{admissible} we get
$$\frac{1}{\mathcal{G}_{(D, V)}(t)}=\frac{1}{\mathcal{G}_{(D_{V-\left\{v\right\}}, V-\left\{v\right\})}(t)}+\frac{1}{\mathcal{G}_{(D_{st(v)}, st(v))}(t)}-\frac{1}{\mathcal{G}_{(D_{lk(v)},  lk(v))}(t)}\,.$$
\end{proof}

\section{$X$-minimality}

\begin{definition}
Let $(D,V)$ be a Dyer system and $g\in D$. For $X\subseteq V$ the element $g$ is called \emph{$X$-minimal} if $l_{sy}(g)\leq l_{sy}(gh)$ fol all $h\in D_X$. 
\end{definition}

Note that, by Proposition \ref{minimallength}, if $g$ is $X$-minimal, then $l_{sy}(gh)=l_{sy}(g)+l_{sy}(h)$ and $l(gh)=l(g)+l(h)$ for all $h\in D_X$.
Note also that, if $X\subseteq Y\subseteq V$ and $g$ is $Y$-minimal, then $g$ is also $X$-minimal, since $gD_X\subseteq gD_Y$.

\begin{definition}
Let $(D,V)$ be a Dyer system and $X\subseteq V$. We define two subsets of $D$ as follows:
$$A_X=A_X(D):=\left\{g\in D\mid g\text{ is }X\text{-minimal}\right\} \text{ and } 
B_X=B_X(D):=A_X-(\cup_{X\subsetneq Y}A_Y)\,.$$
\end{definition}

The following lemma is a particular case of the well-known general M\"obius inversion formula (see \cite[Section 3.7]{Stanley} or \cite{Rota} for example).

\begin{lemma}
\label{Mobius}
Let $V$ be a set and $\mathcal{P}(V)$ be the set of all subsets of $V$. Further, let $G$ be an abelian group. If the functions $f\colon\mathcal{P}(V)\to G$ and $g\colon\mathcal{P}(V)\to G$ satisfy 
$$f(X)=\sum_{X\subseteq Y}g(Y)\text{ for all }X\in\mathcal{P}(V)\,,$$
then they satisfy
$$g(X)=\sum_{X\subseteq Y}(-1)^{|Y-X|}f(Y)\text{ for all }X\in\mathcal{P}(V)\,.$$
\end{lemma}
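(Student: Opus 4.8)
The plan is to prove this by the standard substitution-and-cancellation argument for Möbius inversion over the Boolean lattice $(\mathcal{P}(V),\subseteq)$; since $V$ is finite, every sum that appears is finite and no convergence question arises. First I would fix $X\in\mathcal{P}(V)$ and insert the hypothesis $f(Y)=\sum_{Y\subseteq Z}g(Z)$ into the right-hand side of the asserted identity, which gives
$$\sum_{X\subseteq Y}(-1)^{|Y-X|}f(Y)=\sum_{X\subseteq Y}(-1)^{|Y-X|}\sum_{Y\subseteq Z}g(Z).$$
Because these are finite sums I may reorder them, grouping the terms according to $Z$:
$$\sum_{X\subseteq Y}(-1)^{|Y-X|}\sum_{Y\subseteq Z}g(Z)=\sum_{X\subseteq Z}\Bigl(\sum_{X\subseteq Y\subseteq Z}(-1)^{|Y-X|}\Bigr)g(Z).$$

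Next I would evaluate, for a fixed pair $X\subseteq Z$, the inner coefficient $c(X,Z):=\sum_{X\subseteq Y\subseteq Z}(-1)^{|Y-X|}$. The map $Y\mapsto Y-X$ is a bijection from $\{Y\mid X\subseteq Y\subseteq Z\}$ onto the set of all subsets of $Z-X$, and it takes $|Y-X|$ to the cardinality of the image subset. Setting $k=|Z-X|$ and summing over subsets of $Z-X$ by their size, I get $c(X,Z)=\sum_{j=0}^{k}\binom{k}{j}(-1)^{j}=(1-1)^{k}$, which is $1$ when $k=0$, i.e. $Z=X$, and $0$ otherwise. Substituting this back, every term with $Z\neq X$ vanishes and only the term $Z=X$ survives, contributing $g(X)$; hence $\sum_{X\subseteq Y}(-1)^{|Y-X|}f(Y)=g(X)$, which is exactly the conclusion.

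I do not expect a genuine obstacle here: the argument is entirely elementary, and the only two points that deserve a word of justification are the legitimacy of interchanging the double sum (immediate from finiteness of $\mathcal{P}(V)$) and the binomial identity $(1-1)^{k}=0$ for $k\ge 1$. If one preferred a more conceptual phrasing, one could instead quote the general Möbius inversion theorem for locally finite posets and record that the Möbius function of the Boolean lattice is $\mu(X,Y)=(-1)^{|Y-X|}$, but the self-contained computation above is shorter and suffices for the uses of the lemma later in the paper.
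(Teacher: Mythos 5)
Your argument is correct and complete: substituting the hypothesis into $\sum_{X\subseteq Y}(-1)^{|Y-X|}f(Y)$, exchanging the two finite sums, and collapsing the inner coefficient $\sum_{X\subseteq Y\subseteq Z}(-1)^{|Y-X|}=(1-1)^{|Z-X|}$ via the binomial theorem is exactly the standard proof of M\"obius inversion on the Boolean lattice, and every step you give is sound. The paper, however, does not prove the lemma at all: it simply records it as a special case of the general M\"obius inversion formula and cites Stanley and Rota, which is precisely the ``more conceptual phrasing'' you mention at the end. So your route differs from the paper's only in that you supply the self-contained computation that the citation replaces; this buys a proof readable without outside references, at the cost of a few lines, while the paper's citation buys brevity and places the lemma in the general poset framework. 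One small point to make explicit if you write this up: the lemma as stated says ``let $V$ be a set,'' but your argument (and indeed the statement itself, for the sums $\sum_{X\subseteq Y}$ to be finite sums in an arbitrary abelian group) requires $V$ to be finite; this is harmless here because in the paper $V$ is the finite vertex set of a Dyer graph, but the finiteness hypothesis should be acknowledged rather than silently assumed.
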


\begin{proposition}
\label{Firstformula}
Let $(D,V)$ be a Dyer system. For $X\subseteq V$ we have
$$\mathcal{G}_{(B_X, V)}(t)=\sum_{X\subseteq Y} (-1)^{|Y-X|}\frac{\mathcal{G}_{(D,V)}(t)}{\mathcal{G}_{(D_Y,Y)}(t)}\,.$$
In particular, for $X=\emptyset$ we obtain
$$\mathcal{G}_{(B_\emptyset, V)}(t)=\sum_{Y} (-1)^{|Y|}\frac{\mathcal{G}_{(D,V)}(t)}{\mathcal{G}_{(D_Y,Y)}(t)}\,,$$
which is equivalent to
$$\frac{\mathcal{G}_{(B_\emptyset, V)}(t)+(-1)^{|V|+1}}{\mathcal{G}_{(D,V)}(t)}=\sum_{Y\subsetneq V}\frac{(-1)^{|Y|}}{\mathcal{G}_{(D_Y,Y)}(t)}\,.$$
\end{proposition}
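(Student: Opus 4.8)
The plan is to realize $\mathcal{G}_{(A_X,V)}(t)$ and $\mathcal{G}_{(B_X,V)}(t)$ as the two sides of the M\"obius inversion of Lemma~\ref{Mobius}, taken over the (finite) Boolean lattice $\mathcal{P}(V)$ with values in the abelian group $\mathbb{Z}[[t]]$; note that each $\mathcal{G}_{(D_Y,Y)}(t)$ has constant term $1$ and is therefore invertible in $\mathbb{Z}[[t]]$, so all the quotients below are legitimate there. Setting $f(X):=\mathcal{G}_{(A_X,V)}(t)$ and $g(X):=\mathcal{G}_{(B_X,V)}(t)$, I would prove the two facts (i) $\mathcal{G}_{(A_X,V)}(t)=\mathcal{G}_{(D,V)}(t)/\mathcal{G}_{(D_X,X)}(t)$ and (ii) $A_X=\bigsqcup_{X\subseteq Y}B_Y$, after which the claimed formula drops out of Lemma~\ref{Mobius} combined with (i).

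For (i): by Proposition~\ref{minimallength} together with the remarks following the definition of $X$-minimality, each coset in $\{gD_X\mid g\in D\}$ contains exactly one $X$-minimal element, so $A_X$ is a transversal, and moreover $l_V(g_0h)=l_V(g_0)+l_V(h)$ for every $g_0\in A_X$ and every $h\in D_X$. Hence every $g\in D$ is uniquely $g=g_0h$ with $g_0\in A_X$, $h\in D_X$, and $t^{l_V(g)}=t^{l_V(g_0)}t^{l_V(h)}$; summing over $g$ gives $\mathcal{G}_{(D,V)}(t)=\mathcal{G}_{(A_X,V)}(t)\cdot\mathcal{G}_{(D_X,V)}(t)$, and $\mathcal{G}_{(D_X,V)}(t)=\mathcal{G}_{(D_X,X)}(t)$ by Lemma~\ref{lengthparabolic}.

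Fact (ii) is the heart of the matter, and I expect it to be the only genuinely non-formal step. I would first establish the local characterization: \emph{$g$ is $X$-minimal if and only if $g$ is $\{x\}$-minimal for every $x\in X$.} The forward direction is the downward-closure remark. For the converse, suppose $g$ is not $X$-minimal; let $g_0\in gD_X$ be the element provided by Proposition~\ref{minimallength}, so $g=g_0u$ with $u\in D_X\setminus\{1\}$ and $l_{sy}(g)=l_{sy}(g_0)+l_{sy}(u)$. Pick a reduced syllabic representative of $u$; by \cite[Lemma 2.5]{Paris} all its letters lie in $X$, so its last syllable is $x^a$ with $x\in X$, and removing it leaves a reduced representative of $ux^{-a}$ (a prefix of a reduced syllabic representative is again reduced, by Theorem~\ref{DyerToolbar}\,(2)), whence $l_{sy}(ux^{-a})=l_{sy}(u)-1$. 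Then $l_{sy}(gx^{-a})\le l_{sy}(g_0)+l_{sy}(ux^{-a})=l_{sy}(g)-1$, contradicting $\{x\}$-minimality of $g$. Granting this, put $X(g):=\{x\in V\mid g\text{ is }\{x\}\text{-minimal}\}$; the characterization says $g$ is $Y$-minimal exactly when $Y\subseteq X(g)$, so $A_Y=\{g\in D\mid Y\subseteq X(g)\}$ and hence $B_Y=\{g\in D\mid X(g)=Y\}$. Thus $D$ is the disjoint union of the $B_Y$, and $A_X=\bigsqcup_{X\subseteq Y}B_Y$, which upon passing to growth series is $f(X)=\sum_{X\subseteq Y}g(Y)$.

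Finally, Lemma~\ref{Mobius} yields $\mathcal{G}_{(B_X,V)}(t)=\sum_{X\subseteq Y}(-1)^{|Y-X|}\mathcal{G}_{(A_Y,V)}(t)$, and substituting (i) gives the first displayed formula. Taking $X=\emptyset$ gives the second, and isolating the $Y=V$ summand (which equals $(-1)^{|V|}$ since $D_V=D$), dividing through by $\mathcal{G}_{(D,V)}(t)$, and rearranging yields the third. Beyond the local characterization of $X$-minimality, everything is bookkeeping with Proposition~\ref{minimallength}, Lemma~\ref{lengthparabolic} and the M\"obius lemma.
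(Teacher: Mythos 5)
Your proof is correct and takes essentially the same route as the paper: M\"obius inversion (Lemma \ref{Mobius}) applied to $f(Y)=\mathcal{G}_{(A_Y,V)}(t)$ and $g(Y)=\mathcal{G}_{(B_Y,V)}(t)$, combined with the coset decomposition $D=\bigsqcup_{g\in A_Y}gD_Y$, Proposition \ref{minimallength} and Lemma \ref{lengthparabolic}. The only difference is that you actually prove the partition $A_X=\bigsqcup_{X\subseteq Y}B_Y$ (via the characterization that $g$ is $X$-minimal if and only if it is $\{x\}$-minimal for every $x\in X$), a step the paper merely asserts with a ``Note that''.
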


\begin{proof}
Let $(D,V)$ be a Dyer system. We define two functions $f\colon\mathcal{P}(V)\to\mathbb{Z}[[t]]$ and $g\colon\mathcal{P}(V)\to\mathbb{Z}[[t]]$ where $\mathbb{Z}[[t]]$ is the formal power series ring with coefficients in the group $\mathbb{Z}$ as follows
$$f(X)(t)=\sum_{g\in A_X}t^{l(g)}\text{ and } g(X)(t)=\sum_{g\in B_X}t^{l(g)}\,.$$
Note that $A_X$ is a disjoint union of those $B_Y$, where $X\subseteq Y$. Hence we have
$$f(X)(t)=\sum_{X\subseteq Y}g(Y)(t)\,,$$
and by Lemma \ref{Mobius} we obtain
$$g(X)(t)=\sum_{X\subseteq Y}(-1)^{|Y-X|}f(Y)(t)\,.$$
By definition we have $g(X)(t)=\mathcal{G}_{(B_X,V)}(t)$. Thus we obtain
$$\mathcal{G}_{(B_X,V)}(t)=\sum_{X\subseteq Y}(-1)^{|Y-X|}f(Y)(t)\,.$$
Further, $D=\cup_{g\in A_Y}gD_Y$ and this union is disjoint, hence
$$\mathcal{G}_{(D,V)}(t)=\sum_{g\in A_Y}\sum_{u\in D_Y}t^{l(g)+l(u)}=f(Y)(t)\cdot \mathcal{G}_{(D_Y,V)}(t)\,.$$
Finally, by Lemma \ref{lengthparabolic} we have $\mathcal{G}_{(D_Y,V)}(t)=\mathcal{G}_{(D_Y,Y)}(t)$, hence
$$\mathcal{G}_{(B_X,V)}(t)=\sum_{X\subseteq Y}(-1)^{|Y-X|}f(Y)(t)=\sum_{X\subseteq Y}(-1)^{|Y-X|}\frac{\mathcal{G}_{(D,V)}(t)}{\mathcal{G}_{(D_Y,Y)}(t)}\,.$$
\end{proof}

Our next task is to give a good description of the set $B_\emptyset$. 
We are particularly interested in properties of $(D,V)$ that ensure the set $B_\emptyset$ to be empty.

\begin{definition}
Let $(\Gamma,m,f)$ be a Dyer graph and $(D,V)$ be the associated Dyer system. 
We define $V_2:=\left\{x\in V| f(x)=2\right\}$, $V_\infty:=\left\{x\in V| f(x)=\infty\right\}$, $V_p:=\left\{x\in V| 2<f(x)<\infty \right\}$.
Let $D_2$ resp. $D_\infty$ resp. $D_p$ be the standard parabolic subgroup of $D$ generated by $V_2$ resp. $V_\infty$ resp. $V_p$.

The Dyer group $D$ is called of \emph{spherical type} if $\Gamma$ is a complete graph and $D_2$ is a finite Coxeter group.
\end{definition}

Note that, if $D$ is of spherical type, then $D=D_2\times D_p\times D_\infty$, $D_p=\prod_{x\in V_p} \mathbb{Z}/f(x)\mathbb{Z}$, and $D_\infty=\mathbb{Z}^l$ where $l=|V_\infty|$.

\bigskip
The description of $B_\emptyset$ when $D=D_2$ is a Coxeter group is well-known and it is a direct consequence of the following.

\begin{proposition}(\cite{Bourbaki})
\label{MaxCoxeter}
Let $(W,S)$ be a Coxeter system.
The following conditions on an element $w_0\in W$ are equivalent.
\begin{itemize}
\item[(a)]
For each $u\in W$, $l(w_0)=l(w_0u^{-1})+l(u)$.
\item[(b)]
For each $s\in S$, $l(w_0s)<l(w_0)$.
\end{itemize}
Moreover, $w_0$ exists if and only if $W$ is finite.
If $w_0$ satisfies (a) and/or (b), then $w_0$ is unique, $w_0$ is an involution, and $w_0Sw_0=S$.
\end{proposition}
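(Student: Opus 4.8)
The plan is to argue inside the geometric representation of the Coxeter system $(W,S)$ on the real vector space $E=\bigoplus_{s\in S}\mathbb{R}\alpha_s$, with simple system $\Pi=\{\alpha_s\mid s\in S\}$ and root system $\Phi=W\Pi=\Phi^+\sqcup\Phi^-$, where $\Phi^+$ consists of the roots that are nonnegative linear combinations of the $\alpha_s$. I will freely use the foundational dictionary of Coxeter theory: for $w\in W$ the inversion set $N(w)=\{\alpha\in\Phi^+\mid w\alpha\in\Phi^-\}$ satisfies $l(w)=|N(w)|$; one has $l(ws)<l(w)$ if and only if $w\alpha_s\in\Phi^-$; $N(w)=\emptyset$ if and only if $w=1$; and $W$ is finite if and only if $\Phi$ (equivalently $\Phi^+$) is finite. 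The $D$-module $M$ and cocycle $N$ of Theorem \ref{DyerToolbar}, specialised to a Coxeter group, recover exactly this inversion calculus (with reflections playing the role of positive roots), so one could equally phrase the argument in those terms. As a harmless preliminary I note that, replacing $u$ by $u^{-1}$ and using $l(u)=l(u^{-1})$, condition (a) is equivalent to: $l(w_0u)=l(w_0)-l(u)$ for all $u\in W$.

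With this reformulation the implication (a)$\Rightarrow$(b) is immediate, by taking $u=s\in S$. The substance is (b)$\Rightarrow$(a). Condition (b) says $w_0\alpha_s\in\Phi^-$ for every $s\in S$. Since every $\beta\in\Phi^+$ is a nonnegative linear combination of the $\alpha_s$ and $w_0$ acts linearly, $w_0\beta$ is a nonnegative combination of the vectors $w_0\alpha_s$; each of these has all coordinates $\le 0$ in the basis $\Pi$, so $w_0\beta$ has all coordinates $\le 0$, and being a root it therefore lies in $\Phi^-$. Hence $w_0\Phi^+\subseteq\Phi^-$, i.e. $N(w_0)=\Phi^+$, so $l(w_0)=|\Phi^+|$. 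As $l(w_0)$ is finite, $\Phi^+$ is finite, hence $W$ is finite; and since $w_0$ permutes $\Phi$ with $w_0\Phi^+\subseteq\Phi^-$ and $|\Phi^+|=|\Phi^-|<\infty$, in fact $w_0\Phi^+=\Phi^-$ and $w_0\Phi^-=\Phi^+$. For arbitrary $u\in W$ this yields $N(w_0u)=\{\alpha\in\Phi^+\mid u\alpha\in w_0^{-1}\Phi^-=\Phi^+\}=\Phi^+\setminus N(u)$, so $l(w_0u)=|\Phi^+|-l(u)=l(w_0)-l(u)$, which is the reformulated (a).

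For the remaining assertions, consider first existence. If $W$ is finite there is a $w_0$ of maximal length, and since $l(w_0s)$ differs from $l(w_0)$ by exactly $1$ and cannot exceed it, $w_0$ satisfies (b); conversely (b) forces $W$ finite by the above, so $w_0$ exists precisely when $W$ is finite. Everything else flows from the observation that an element satisfies (a)/(b) if and only if its inversion set is all of $\Phi^+$. Since $l(w)=|N(w)|\le|\Phi^+|$ for every $w$, such a $w_0$ has maximal length; and if $w$ and $w'$ both have inversion set $\Phi^+$, then $w^{-1}w'$ maps $\Phi^+$ onto $\Phi^+$, so $N(w^{-1}w')=\emptyset$ and $w=w'$, which is uniqueness. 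Because $l(w_0^{-1})=l(w_0)=|\Phi^+|$, the element $w_0^{-1}$ also has full inversion set and hence satisfies (b), so $w_0^{-1}=w_0$ by uniqueness. Finally, for $s\in S$ put $t=w_0sw_0$, a reflection; the reformulated (a) with $u=t$ gives $l(w_0t)=l(w_0)-l(t)$, while $w_0t=sw_0$ and $l(sw_0)=l(w_0^{-1}s)=l(w_0s)=l(w_0)-1$, whence $l(t)=1$, i.e. $t\in S$. As conjugation by $w_0$ is a bijection of $W$ and $S$ is finite, $w_0Sw_0\subseteq S$ improves to $w_0Sw_0=S$.

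The step I expect to be the main obstacle is (b)$\Rightarrow$(a) together with the finiteness conclusion, since that is where one must pass from the purely local information that each simple reflection is a descent of $w_0$ to the global statement $w_0\Phi^+\subseteq\Phi^-$; the essential inputs are the description of positive roots as nonnegative combinations of simple roots and the identity $l(w)=|N(w)|$, both of which are standard results of Coxeter theory that I would simply invoke (in the paper's framework they can be read off from Theorem \ref{DyerToolbar}). Once $N(w_0)=\Phi^+$ is in hand, all the remaining claims reduce to routine manipulations of inversion sets.
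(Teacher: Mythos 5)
The paper does not prove Proposition \ref{MaxCoxeter} at all: it is quoted from Bourbaki as a classical fact, so there is no internal proof to compare against. Your argument is correct and is essentially the standard proof from the cited literature: descents at every simple reflection force $w_0\Phi^+\subseteq\Phi^-$, hence $N(w_0)=\Phi^+$, which yields finiteness of $\Phi^+$ (hence of $W$), the length identity $l(w_0u)=l(w_0)-l(u)$, and then uniqueness, $w_0^2=1$, and $w_0Sw_0=S$ by routine inversion-set manipulations; the standard facts you invoke (positive/negative dichotomy of roots, $l(w)=|N(w)|$, parity of $l(ws)-l(w)$, and that length-one elements lie in $S$) are exactly the background the citation to \cite{Bourbaki} is meant to cover, so no gap.
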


The element $w_0$ of Proposition \ref{MaxCoxeter} is called the \emph{longest element} of $W$, if it exists.
The following is a straightforward consequence of Proposition \ref{MaxCoxeter}.

\begin{corollary}
Let $(W,S)$ be a Coxeter system.
We have $B_\emptyset(W)\neq\emptyset$ if and only if $W$ is finite.
If $W$ is finite, then $B_\emptyset(W)=\{w_0\}$, where $w_0$ is the longest element of $W$.
\end{corollary}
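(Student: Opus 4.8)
The plan is to derive the corollary as an immediate consequence of Proposition \ref{MaxCoxeter}, unwinding the definition of $B_\emptyset$. Recall that $B_\emptyset(W) = A_\emptyset(W) - \bigcup_{\emptyset \subsetneq Y} A_Y(W)$, and that $A_\emptyset(W) = W$ since every element is trivially $\emptyset$-minimal (the only element of $D_\emptyset = \{1\}$ is the identity). So $w \in B_\emptyset(W)$ precisely when $w$ is not $Y$-minimal for any nonempty $Y \subseteq S$; equivalently, since $Y$-minimality for some nonempty $Y$ implies $\{s\}$-minimality for some $s \in Y$ (by the second remark after the definition of $X$-minimality, $X \subseteq Y$ and $g$ being $Y$-minimal implies $g$ is $X$-minimal), $w \in B_\emptyset(W)$ if and only if $w$ is not $\{s\}$-minimal for every $s \in S$. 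In a Coxeter group all generators have order two, so $D_{\{s\}} = \{1, s\}$ and $w$ fails to be $\{s\}$-minimal exactly when $l_{sy}(ws) < l_{sy}(w)$, which, since $l = l_{sy}$ here, means $l(ws) < l(w)$.

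So $B_\emptyset(W)$ consists exactly of those $w \in W$ satisfying condition (b) of Proposition \ref{MaxCoxeter}: $l(ws) < l(w)$ for all $s \in S$. By that proposition, such an element exists if and only if $W$ is finite; when it exists it is unique and equal to the longest element $w_0$. This gives both claims: $B_\emptyset(W) \neq \emptyset \iff W$ finite, and in the finite case $B_\emptyset(W) = \{w_0\}$.

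First I would state and justify the reduction $w \in B_\emptyset(W) \iff$ ($w$ is not $\{s\}$-minimal for any $s$), citing the monotonicity remark after the $X$-minimality definition; then I would translate $\{s\}$-minimality into the length inequality using $l = l_{sy}$ for Coxeter groups and $D_{\{s\}} = \{1,s\}$; finally I would invoke Proposition \ref{MaxCoxeter} directly. I do not anticipate a genuine obstacle here — the only point requiring a little care is making sure that failing to be $Y$-minimal for every nonempty $Y$ is equivalent to failing to be $\{s\}$-minimal for every singleton $\{s\}$, and this is precisely the content of the observation that $Y$-minimality implies $\{s\}$-minimality for each $s \in Y$ (so if $w$ were $\{s\}$-minimal for no $s$, it is certainly $Y$-minimal for no nonempty $Y$; conversely a singleton is itself a nonempty subset).
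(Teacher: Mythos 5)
Your argument is correct and is exactly the intended one: the paper gives no proof, calling the corollary a straightforward consequence of Proposition \ref{MaxCoxeter}, and your unwinding of $B_\emptyset$ via the reduction to singleton-minimality and the identification with condition (b) is precisely the routine verification being left to the reader.
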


In the general case we have the following.

\begin{lemma}
\label{MaxDyer}
Let $(D,V)$ be a Dyer system.
\begin{enumerate}
\item
We have $B_\emptyset\neq\emptyset$ if and only if $D$ is of spherical type.
\item
Suppose $D$ is of spherical type.
Set $V_p=\{x_1,\dots,x_k\}$ and $V_\infty=\{y_1,\dots,y_l\}$.
Let $g\in D$.
Then $g\in B_\emptyset$ if and only if $g$ can be written in the form
$$g=w_0x_1^{a_1}\ldots x_k^{a_k}y_1^{b_1}\ldots y_l^{b_l}\,,$$
where $w_0$ is the longest element of $D_2$, $a_i\in(\mathbb{Z}/f(x_i)\mathbb{Z})-\{0\}$ for all $i\in\{1,\ldots,k\}$, and $b_j\in\mathbb{Z}-\{0\}$ for all $j\in\{1,\ldots,l\}$.
\end{enumerate}
\end{lemma}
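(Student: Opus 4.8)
The key observation is that $A_\emptyset=D$ while $A_Y\subseteq A_{\{x\}}$ whenever $x\in Y$, so $B_\emptyset=D\setminus\bigcup_{x\in V}A_{\{x\}}$; hence $g\in B_\emptyset$ precisely when $g$ fails to be $\{x\}$-minimal for \emph{every} $x\in V$, i.e.\ when for each $x\in V$ there is an exponent $a$ with $l_{sy}(gx^a)<l_{sy}(g)$. The plan is to analyse this condition one generator at a time, after splitting $D$ either as an amalgam or as a direct product. Two elementary sublemmas will be used repeatedly. \emph{(i)} In a free product $\langle v\rangle*\langle w\rangle$, writing $h$ in its alternating syllabic normal form, $h$ fails to be $\{v\}$-minimal if and only if its last syllable is a nonzero power of $v$; in particular $h$ cannot simultaneously fail $\{v\}$-minimality and $\{w\}$-minimality, since it has only one last syllable. \emph{(ii)} Syllable length is additive with respect to a direct product taken with the union of the generating sets (collect the syllables of a syllabically shortest word factor by factor); combined with the syllabic analogue of Lemma \ref{lengthparabolic} (a reduced syllabic representative of an element of a parabolic uses only generators of that parabolic), this lets the $\{x\}$-minimality question be pushed into the direct factor containing $x$.

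I would prove part (1) in contrapositive form: if $D$ is not of spherical type, then $B_\emptyset=\emptyset$. Suppose $g\in B_\emptyset$. If $\Gamma$ is not complete, choose $v\in V$ with $st(v)\neq V$ and $w\in V\setminus st(v)$; then $D_{\{v,w\}}=\langle v\rangle*\langle w\rangle$ by the description of standard parabolics. By Proposition \ref{minimallength} applied to $D_{\{v,w\}}$ we may write $g=g_0h_0$ with $h_0\in D_{\{v,w\}}$ and $l_{sy}(gh)=l_{sy}(g_0)+l_{sy}(h_0h)$ for all $h\in D_{\{v,w\}}$; in particular $l_{sy}(gx^a)<l_{sy}(g)$ is equivalent to $l_{sy}(h_0x^a)<l_{sy}(h_0)$ for $x\in\{v,w\}$. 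Since $g$ fails both $\{v\}$- and $\{w\}$-minimality, so does $h_0$, contradicting sublemma (i). Hence $\Gamma$ is complete, so $D=D_2\times D_p\times D_\infty$. Writing $g=wpq$ accordingly, sublemma (ii) shows that $g$ fails $\{x\}$-minimality for every $x\in V_2$ if and only if $w$ does, i.e.\ $l(wx)<l(w)$ for all $x\in V_2$ (using $l=l_{sy}$ on the Coxeter group $D_2$); by Proposition \ref{MaxCoxeter} such a $w$ exists only if $D_2$ is finite. Thus $D$ is of spherical type.

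For part (2) — which also supplies the remaining implication of part (1) — assume $D$ is of spherical type, so $D=D_2\times D_p\times D_\infty$ with $D_p=\prod_{i}\mathbb{Z}/f(x_i)\mathbb{Z}$, $D_\infty=\mathbb{Z}^l$, and $D_2$ finite with longest element $w_0$. Write $g$ uniquely as $g=w\,x_1^{a_1}\cdots x_k^{a_k}\,y_1^{b_1}\cdots y_l^{b_l}$ with $w\in D_2$, $a_i\in\mathbb{Z}/f(x_i)\mathbb{Z}$, $b_j\in\mathbb{Z}$. Using sublemma (ii) and the trivial syllable-length computation in a cyclic group (a power $z^{c}$ has syllable length $1$ iff $c\neq 0$), one checks that $g$ is $\{x\}$-minimal iff: $w$ is $\{x\}$-minimal in $D_2$ when $x\in V_2$; $a_i=0$ when $x=x_i$; $b_j=0$ when $x=y_j$. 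Therefore $g\in B_\emptyset$ iff $w$ fails $\{x\}$-minimality for all $x\in V_2$ — equivalently $w=w_0$ by Proposition \ref{MaxCoxeter}(b) — and $a_i\neq 0$ for all $i$ and $b_j\neq 0$ for all $j$, which is exactly the asserted normal form. Since $w_0x_1\cdots x_ky_1\cdots y_l$ has this form, $B_\emptyset\neq\emptyset$, which completes part (1) as well.

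The only genuinely non-formal step is the non-complete case of part (1): everything there hinges on Proposition \ref{minimallength} faithfully transporting the $\{x\}$-minimality question from $g$ to its $D_{\{v,w\}}$-part $h_0$, and on the fact that a reduced word in a free product of two cyclic groups has a single, unambiguously $v$- or $w$-typed last syllable. The other ingredients — additivity of syllable length over direct products, the syllabic version of Lemma \ref{lengthparabolic}, and the characterisation of the longest element via Proposition \ref{MaxCoxeter} — are routine, but I would state them precisely as small lemmas before invoking them.
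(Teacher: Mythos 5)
Your proposal is correct and takes essentially the same route as the paper's proof: you transport the failure of $\{x\}$-minimality to the rank-two parabolic via Proposition \ref{minimallength} to force $\Gamma$ to be complete (your last-syllable sublemma in the free product is exactly the step the paper leaves implicit), then use additivity over the direct product $D_2\times D_p\times D_\infty$ together with Proposition \ref{MaxCoxeter} to get finiteness of $D_2$, and analyse the normal form $w\,x_1^{a_1}\cdots x_k^{a_k}y_1^{b_1}\cdots y_l^{b_l}$ for part (2). No gaps.
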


\begin{proof}
We first prove that, if $B_\emptyset\neq\emptyset$, then $D$ is of spherical type.
We will then show that, if $D$ is of spherical type, then $B_\emptyset\neq\emptyset$ and the elements of $B_\emptyset$ are as described in part (2).

Suppose $B_\emptyset\neq\emptyset$.
This means that there exists $g\in D$ such that $g\not\in A_{\{x\}}$ for all $x\in V$.
So, we can pick $g\in D$ such that, for all $x\in V$, there exists $a\in\mathbb{Z}_{f(x)}-\{0\}$ such that $l_{sy}(gx^a)\le l_{sy}(g)$.
We start by showing that $\Gamma$ is complete.
Let $x,y\in V$, $x\neq y$.
Set $X=\{x,y\}$.
We know that there exist $a\in\mathbb{Z}_{f(x)}-\{0\}$ and $b\in\mathbb{Z}_{f(y)}-\{0 \}$ such as $l_{sy}(gx^a)\le l_{sy}(g)$ and $l_{sy}(gy^b)\le l_{sy}(g)$.
On the other hand there exist $g_0\in A_X$ and $h\in D_X$ such that $g=g_0h$.
By Proposition \ref{minimallength} we have
$$l_{sy}(g)=l_{sy}(g_0)+l_{sy}(h)\,,\ l_{sy}(gx^a)=l_{sy}(g_0)+l_{sy }(hx^a)\,,\ l_{sy}(gy^b)=l_{sy}(g_0)+
l_{sy}(hy^b)\,.$$
Thus, $l_{sy}(hx^a)\le l_{sy}(h)$ and $l_{sy}(hy^b)\le l_{sy}(h)$.
If $x$ and $y$ are not connected by an edge, then $D_X=\mathbb{Z}_{f(x)}*\mathbb{Z}_{f(y)}$ and there is no $h$ in $\mathbb{Z}_{f(x)}*\mathbb{Z}_{f(y)}$ such that $l_{sy}(hx^a)\le l_{ sy}(h)$ and $l_{sy}(hy^b)\le l_{sy}(h)$.
So, $x$ and $y$ are connected by an edge.

Since $\Gamma$ is a complete graph, we have $D=D_2\times D_p\times D_\infty$, $D_p=\prod_{x\in V_p}\mathbb{Z}/f(x)\mathbb{Z} $, and $D_\infty=\mathbb{Z}^l$, where $l=|V_\infty|$.
Let $g\in B_\emptyset$ that we write in the form $g=g_2g_pg_\infty$ with $g_2\in D_2$, $g_p\in D_p$, and $g_\infty\in D_\infty$.
For each $x\in V_2$ we have
$$l(g_2x)+l_{sy}(g_p)+l_{sy}(g_\infty)=l_{sy}(gx)\le l_{sy}(g)=l(g_2)+l_{sy}(g_p)+l_{sy}(g_\infty)\,,$$
hence $l(g_2x)\le l(g_2)$.
By Proposition \ref{MaxCoxeter} this implies that $D_2$ is a finite Coxeter group and $g_2$ is the longest element of $D_2$.
So, if $B_\emptyset\neq\emptyset$, then $D$ is of spherical type.

Suppose now that $D$ is of spherical type.
Then $D=D_2\times D_p\times D_\infty$, $D_2$ is a finite Coxeter group, $D_p=\prod_{x\in V_p}\mathbb{Z}/f(x)\mathbb{Z }$, and $D_\infty=\mathbb{Z}^l$, where $l=|V_\infty|$.
Set $V_p=\{x_1,\dots,x_k\}$ and $V_\infty=\{y_1,\dots,y_l\}$.
Let $g\in B_\emptyset$.
Write $g$ in the form $g=wx_1^{a_1}\ldots x_k^{a_k}y_1^{b_1}\ldots y_l^{b_l}$ with $w\in D_2$, $a_i\in\mathbb{Z}/f(x_i)\mathbb{Z} $ for all $i\in\{1,\dots,k\}$, and $b_j\in\mathbb{Z}$ for all $j\in\{1,\ldots,l\}$.
Let $i\in\{1,\ldots,k\}$.
If we had $a_i=0$, then we would have $l_{sy}(gx_i^c)>l_{sy}(g)$ for all $c\in(\mathbb{Z}/f(x_i)\mathbb {Z})-\{0\}$, hence we would have $g\not\in B_\emptyset$.
So $a_i\neq0$ for all $i\in\{1,\dots,k\}$.
Similarly, $b_j\neq0$ for all $j\in\{1,\ldots,l\}$.
If $w$ were not the longest element of $D_2$, then there would exist $x\in V_2$ such that $l(wx)>l(w)$, hence there would exist $x\in V_2$ such that $l_{sy}(gx)>l_{sy}(g)$.
So, $w$ is the longest element of $D_2$.

Let $g\in D$ which can be written in the form $g=w_0x_1^{a_1}\ldots x_k^{a_k}y_1^{b_1}\ldots y_l^{b_l}$, where $w_0$ is the longest element of $ D_2$, $a_i\in(\mathbb{Z}/f(x_i)\mathbb{Z})-\{0\}$ for all $i\in\{1,\ldots,k\}$, and $ b_j\in\mathbb{Z}-\{0\}$ for all $j\in\{1,\ldots,l\}$.
Notice that such an element always exists.
It is easily seen that $l_{sy}(gx_i^{-a_i})<l_{sy}(g)$ for all $i\in\{1,\ldots,k\}$ and $l_{sy}(gy_j^{-b_j})<l_{sy}(g)$ for all $j\in\{1,\ldots,l\}$.
On the other hand, if $x\in V_2$, then $l(w_0x)<l(w_0)$, hence $l_{sy}(gx)<l_{sy}(g)$.
So, $g\in B_\emptyset$.
\end{proof}

Let $(D,V)$ be a Dyer system of spherical type.
So, $D=D_2\times D_p\times D_\infty$, $D_2$ is a finite Coxeter group, $D_p=\prod_{x\in V_p}\mathbb{Z}/f(x)\mathbb {Z}$, and $D_\infty=\mathbb{Z}^l$, where $l=|V_\infty|$.
Let $x\in V_p$.
If $f(x)=2r$ is even we set $P_x(t)=2t+2t^2+\ldots+2t^{r-1}+t^r$, and if $f(x)=2r+1$ is odd we set $P_x(t)=2t+2t^2+\ldots+2t^r$.
Then we set 
$$P_D(t)=t^m\left(\prod_{x\in V_p}P_x(t)\right)\frac{2^lt^l}{(1-t)^l}\,,$$
where $m$ is the maximal length in $D_2$.

As an immediate corollary we obtain the following whose first part finishes the proof of Theorem \ref{MainTheorem}.

\begin{corollary}
\label{CorPart1}
Let $(D,V)$ be a Dyer system. 
\begin{enumerate}
\item 
If $D$ is not of spherical type, then
$$\frac{(-1)^{|V|+1}}{\mathcal{G}_{(D,V)}(t)}=\sum_{Y\subsetneq V}\frac{(-1)^{|Y|}}{\mathcal{G}_{(D_Y,Y)}(t)}\,.$$
\item 
If $D$ is of spherical type, then 
$$\frac{P_D(t)+(-1)^{|V|+1}}{\mathcal{G}_{(D,V)}(t)}=\sum_{Y\subsetneq V}\frac{(-1)^{|Y|}}{\mathcal{G}_{(D_Y,Y)}(t)}\,.$$
\end{enumerate}
\end{corollary}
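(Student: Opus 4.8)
The plan is to read off both statements from Proposition~\ref{Firstformula} applied with $X=\emptyset$, which reads
$$\frac{\mathcal{G}_{(B_\emptyset, V)}(t)+(-1)^{|V|+1}}{\mathcal{G}_{(D,V)}(t)}=\sum_{Y\subsetneq V}\frac{(-1)^{|Y|}}{\mathcal{G}_{(D_Y,Y)}(t)}\,,$$
combined with Lemma~\ref{MaxDyer}. For part~(1), since $D$ is not of spherical type, Lemma~\ref{MaxDyer}\,(1) gives $B_\emptyset=\emptyset$, so $\mathcal{G}_{(B_\emptyset,V)}(t)=0$ and the displayed identity becomes exactly the asserted formula. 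For part~(2) the whole content is the identity $\mathcal{G}_{(B_\emptyset,V)}(t)=P_D(t)$; once this is in hand, substituting it into the displayed identity again yields the claim. So the real work is computing $\mathcal{G}_{(B_\emptyset,V)}(t)$ in the spherical case.

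Assuming $D$ is of spherical type, I would first record the structure of $B_\emptyset$: here $D=D_2\times D_p\times D_\infty$ with $D_2$ a finite Coxeter group, $D_p=\prod_{i=1}^k\mathbb{Z}/f(x_i)\mathbb{Z}$ where $V_p=\{x_1,\dots,x_k\}$, and $D_\infty=\mathbb{Z}^l$ where $V_\infty=\{y_1,\dots,y_l\}$. By Lemma~\ref{MaxDyer}\,(2), $B_\emptyset$ is precisely the set of elements
$$g=w_0x_1^{a_1}\cdots x_k^{a_k}y_1^{b_1}\cdots y_l^{b_l}\,,$$
with $w_0$ the longest element of $D_2$, each $a_i\in(\mathbb{Z}/f(x_i)\mathbb{Z})-\{0\}$, and each $b_j\in\mathbb{Z}-\{0\}$. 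By uniqueness of decomposition in the direct product, the map $(a_1,\dots,a_k,b_1,\dots,b_l)\mapsto g$ is a bijection from the set of such tuples onto $B_\emptyset$. Since the generators occurring in the three blocks $w_0$, $x_1^{a_1}\cdots x_k^{a_k}$, $y_1^{b_1}\cdots y_l^{b_l}$ are pairwise distinct, the concatenation of a reduced syllabic representative of $w_0$ with $(x_1^{a_1},\dots,x_k^{a_k},y_1^{b_1},\dots,y_l^{b_l})$ is a reduced syllabic representative of $g$, i.e. word length is additive over the direct factors, so Corollary~\ref{DyerConsequence} gives
$$l(g)=l(w_0)+\sum_{i=1}^k\|a_i\|_{f(x_i)}+\sum_{j=1}^l\|b_j\|_\infty=m+\sum_{i=1}^k\|a_i\|_{f(x_i)}+\sum_{j=1}^l|b_j|\,,$$
where $m$ is the maximal length in $D_2$.

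Summing $t^{l(g)}$ over $B_\emptyset$ and using the bijection above, the sum factors as
$$\mathcal{G}_{(B_\emptyset,V)}(t)=t^m\left(\prod_{i=1}^k\ \sum_{a\in(\mathbb{Z}/f(x_i)\mathbb{Z})-\{0\}}t^{\|a\|_{f(x_i)}}\right)\left(\prod_{j=1}^l\ \sum_{b\in\mathbb{Z}-\{0\}}t^{|b|}\right)\,.$$
An elementary tally of the nonzero elements of $\mathbb{Z}/f(x)\mathbb{Z}$ by length --- two elements of each length $1,\dots,r-1$ and one of length $r$ when $f(x)=2r$, and two of each length $1,\dots,r$ when $f(x)=2r+1$ --- shows $\sum_{a\neq 0}t^{\|a\|_{f(x)}}=P_x(t)$ in both cases, while $\sum_{b\in\mathbb{Z}-\{0\}}t^{|b|}=2t/(1-t)$. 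Substituting gives $\mathcal{G}_{(B_\emptyset,V)}(t)=t^m\big(\prod_{x\in V_p}P_x(t)\big)\,2^lt^l/(1-t)^l=P_D(t)$, which completes part~(2). I expect no genuine obstacle here; the only point requiring a moment's care is the justification that lengths add across the three direct factors, i.e.\ that the exhibited syllabic representative of $g$ is reduced, and this follows from Proposition~\ref{minimallength} together with Corollary~\ref{DyerConsequence} because the underlying generators in the three blocks are distinct.
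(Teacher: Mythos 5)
Your proposal is correct and follows exactly the paper's route: apply Proposition~\ref{Firstformula} with $X=\emptyset$, use Lemma~\ref{MaxDyer} to get $B_\emptyset=\emptyset$ in the non-spherical case, and in the spherical case compute $\mathcal{G}_{(B_\emptyset,V)}(t)=t^m\bigl(\prod_{x\in V_p}P_x(t)\bigr)\bigl(\tfrac{2t}{1-t}\bigr)^l=P_D(t)$ from the explicit description of $B_\emptyset$. The only difference is that you spell out the length-additivity across the direct factors (via reduced syllabic representatives and Corollary~\ref{DyerConsequence}), which the paper leaves implicit; this is a harmless, indeed welcome, amount of extra detail.
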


\begin{proof}
By Proposition \ref{Firstformula} it suffices to show that $\mathcal{G}_{(B_\emptyset,V)}(t)=0$ if $D$ is not of spherical type and that $\mathcal {G}_{(B_\emptyset,V)}(t)=P_D(t)$ if $D$ is of spherical type.
If $D$ is not of spherical type then, by Lemma \ref{MaxDyer}, $B_\emptyset=\emptyset$, hence $\mathcal{G}_{(B_\emptyset,V)}(t)=0$.
Suppose $D$ is of spherical type.
Let $V_p=\{x_1,\dots,x_k\}$ and $V_\infty=\{y_1,\dots,y_l\}$, and let $m$ be the maximal length in $D_2$.
Then, by Lemma \ref{MaxDyer},
\begin{gather*}
\mathcal{G}_{(B_\emptyset,V)}(t)=t^m\left(\prod_{i=1}^k \mathcal{G}_{(\mathbb{Z}/f(x_i)\mathbb{Z}-\{0\},\{1\})}(t)
\right)\left(\prod_{i=1}^l\mathcal{G}_{(\mathbb{Z}-\{0\},\{1\})}(t)\right)=\\
t^m\left(\prod_{i=1}^kP_{x_i}(t)
\right)\left(\frac{2t}{1-t}\right)^l=P_D(t)\,.
\end{gather*}
\end{proof}

We end this chapter with the proof of Theorem \ref{MainTheorem}.
\begin{proof}[Proof of Theorem \ref{MainTheorem}]
The first part follows from Corollary \ref{CorPart1} which we already mentioned above. We assume now that $D$ is of spherical type, then $D=D_2\times D_p\times D_\infty$. Hence we can use the formula for direct products on page 5. We get
$$\mathcal{G}_{(D,V)}(t)=\mathcal{G}_{(D_2,V_2)}(t)\cdot \mathcal{G}_{(D_p,V_p)}(t)\cdot \mathcal{G}_{(D_\infty,V_\infty)}(t)\,.$$
Further, since $\mathcal{G}_{(\mathbb{Z},\left\{1\right\})}(t)=\frac{1+t}{1-t}$ we get 
$\mathcal{G}_{(D_\infty,V_\infty)}(t)=\frac{(1+t)^l}{(1-t)^l}$ where $l$ is the cardinality of $V_\infty$. A direct calculation shows the formulas for the growth series of finite cyclic groups with the standard generating sets which ends the proof of the second part of the theorem.
\end{proof}

\section{Euler characteristic}

We start this chapter by recalling the definition and useful formulas of the Euler characteristic of groups. Following \cite{Brown} a group $G$ is said to be of \emph{finite homological type} if the virtual cohomological dimension of $G$ is finite and for every $G$-module $M$ which is finitely generated as an abelian group, $H_i(G,M)$ is finitely generated for all $i$. If $G$ is torsion-free and of finite homological type, then its Euler characteristic is defined by
$$\chi(G):=\sum (-1)^i rk_{\mathbb{Z}}(H_i(G))\,.$$
If $G$ is of finite homological type and has a torsion free subgroup $H$ of finite index, then the Euler characteristic of $G$ is defined by
$$\chi(G):=\frac{\chi(H)}{[G:H]}\,.$$
We list some useful properties of the Euler characteristic.

\begin{proposition}(\cite[Proposition 7.3]{Brown})
\label{Brownformulas}
\begin{enumerate}
\item Let $1\rightarrow A\rightarrow B\rightarrow C\rightarrow 1$ be a short exact sequence where $A$ and $C$ are of finite homological type. If $B$ is virtually torsion-free, then $B$ is of finite homolocial type and
$$\chi(B)=\chi(A)\cdot\chi(C)\,.$$
\item Let $G=A*_B C$ be an amalgamated product where $A, B, C$ are of finite homological type. If $G$ is virtually torsion free, then $G$ is of finite homological type and
$$\chi(G)=\chi(A)+\chi(C)-\chi(B)\,.$$
\end{enumerate}
\end{proposition}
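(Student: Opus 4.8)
The plan is to establish both formulas first for torsion-free groups, where $\chi$ is the honest alternating sum of homology ranks, and then to bootstrap to the virtually torsion-free case by passing to a torsion-free finite-index subgroup and using multiplicativity of $\chi$ under finite-index inclusions. Throughout I rely on two foundational facts, both standard for groups of finite homological type. First, a finite-index subgroup $H\le G$ of a torsion-free group of finite homological type is again of finite homological type, and $\chi(H)=[G:H]\cdot\chi(G)$; this is exactly what makes the rational Euler characteristic $\chi(G)=\chi(G')/[G:G']$ well defined (any two torsion-free finite-index $G'$ have a common finite-index subgroup, and the values agree). Second, for such $G$ and a finite-dimensional $\mathbb{Q}G$-module $M$ one has $\sum_i(-1)^i\dim_{\mathbb{Q}}H_i(G;M)=\chi(G)\cdot\dim_{\mathbb{Q}}M$; this follows by choosing a finite free resolution $P_\bullet\to\mathbb{Q}$ (available since $G$ is $FP$ of finite cohomological dimension) and computing the Euler characteristic of the finite complex $P_\bullet\otimes_{\mathbb{Q}G}M$ two ways.

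For part (1), I first take $A$, $B$, $C$ torsion-free. The Lyndon--Hochschild--Serre spectral sequence $E^2_{p,q}=H_p(C;H_q(A;\mathbb{Q}))\Rightarrow H_{p+q}(B;\mathbb{Q})$ has only finitely many nonzero terms, all finite-dimensional, and since passing from one page to the next is taking homology of a complex, the Euler characteristic is constant across pages; hence $\chi(B)=\sum_{p,q}(-1)^{p+q}\dim_{\mathbb{Q}}H_p(C;H_q(A;\mathbb{Q}))$. Applying the coefficient formula to each $H_q(A;\mathbb{Q})$ collapses the inner sum to $\chi(C)\cdot\dim_{\mathbb{Q}}H_q(A;\mathbb{Q})$, and summing over $q$ gives $\chi(B)=\chi(C)\cdot\chi(A)$; the same spectral sequence yields the finiteness of the homological type of $B$ (in particular $\mathrm{vcd}\,B\le\mathrm{vcd}\,A+\mathrm{vcd}\,C$). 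To drop the torsion-free hypothesis, choose a torsion-free finite-index $B'\le B$ and set $A'=A\cap B'$ and $C'=\mathrm{im}(B'\to C)$; then $1\to A'\to B'\to C'\to1$ is exact with $A',C'$ torsion-free of finite index in $A,C$, so $\chi(B')=\chi(A')\chi(C')$. A short double-coset count gives $[B:B']=[A:A']\cdot[C:C']$, and dividing out the indices via the first foundational fact yields $\chi(B)=\chi(A)\chi(C)$.

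For part (2), I again pass to a torsion-free finite-index subgroup $G'\le G$ of index $n$. By Bass--Serre theory $G$ acts on the tree $T$ of the splitting $A*_BC$, with two vertex orbits (stabilizers $A$, $C$) and one edge orbit (stabilizer $B$); restricting to $G'$ makes $Y=G'\backslash T$ a finite graph of groups whose cell stabilizers are finite-index in conjugates of $A$, $B$, $C$, hence torsion-free of finite homological type. Since $T$ is contractible, its reduced cellular chain complex is a short exact sequence of $\mathbb{Q}G'$-modules
$$0\to\bigoplus_{e}\mathbb{Q}[G'/G'_e]\to\bigoplus_{v}\mathbb{Q}[G'/G'_v]\to\mathbb{Q}\to0\,,$$
the sums running over edge and vertex orbits. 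Shapiro's lemma gives $\sum_i(-1)^i\dim_{\mathbb{Q}}H_i(G';\mathbb{Q}[G'/H])=\chi(H)$ for each stabilizer $H$, and additivity of Euler characteristics over the induced bounded long exact sequence of finite-dimensional spaces yields
$$\chi(G')=\sum_{v}\chi(G'_v)-\sum_{e}\chi(G'_e)\,.$$

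It then remains to collapse the right-hand side. The $A$-type vertices of $Y$ correspond to the double cosets $G'\backslash G/A$; for a representative $g_i$ the stabilizer is $G'\cap g_iAg_i^{-1}$, and by the first foundational fact $\chi(G'\cap g_iAg_i^{-1})=d_i\cdot\chi(A)$ with $d_i=[g_iAg_i^{-1}:G'\cap g_iAg_i^{-1}]=[A:A\cap g_i^{-1}G'g_i]$. Counting left $G'$-cosets inside each double coset gives $\sum_i d_i=[G:G']=n$, so the $A$-type vertices contribute $n\,\chi(A)$; the $C$-type vertices and the edges contribute $n\,\chi(C)$ and $n\,\chi(B)$ by the same count. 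Hence $\chi(G')=n\bigl(\chi(A)+\chi(C)-\chi(B)\bigr)$, and dividing by $n$ gives $\chi(G)=\chi(A)+\chi(C)-\chi(B)$. The main obstacle, and the step demanding the most care, is precisely this bookkeeping in part (2): getting the direction of the multiplicativity $\chi(\text{subgroup})=\text{index}\cdot\chi(\text{overgroup})$ right, verifying the identity $\sum_i d_i=n$, and checking that $Y$ is finite with stabilizers of finite homological type so that the graph-of-groups formula applies; the corresponding technical core of part (1) is the convergence and Euler-characteristic-invariance of the spectral sequence together with the coefficient formula.
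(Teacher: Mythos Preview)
The paper does not prove this proposition at all: it is stated with a citation to \cite[Proposition 7.3]{Brown} and used as a black box. Your proposal therefore supplies a complete argument where the paper gives only a reference. For what it is worth, your approach is essentially the one Brown takes: the Lyndon--Hochschild--Serre spectral sequence for extensions in part (1), the cellular chain complex of the Bass--Serre tree together with Shapiro's lemma for amalgams in part (2), and in each case a reduction to the torsion-free situation via a finite-index subgroup and the multiplicativity of $\chi$. The bookkeeping you flag as the delicate point (the index identity $[B:B']=[A:A']\cdot[C:C']$ in part (1), and the double-coset count $\sum_i d_i=n$ in part (2)) is correct as you state it.
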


As a corollary we obtain

\begin{corollary}
\label{EulerDyerFormulas}
Let $(D,V)$ and $(D', V')$ be Dyer systems.
\begin{enumerate}
\item $\chi(D\times D')=\chi(D)\cdot\chi(D')$.
\item If $D=D_{V-\left\{x\right\}}*_{D_{lk(x)}} D_{st(x)}$, then
$$\chi(D)=\chi(D_{V-\left\{x\right\}})+\chi(D_{st(x)})-\chi(D_{lk(x)})\,.$$
\end{enumerate}
\end{corollary}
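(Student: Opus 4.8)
The plan is to deduce both equalities directly from Proposition \ref{Brownformulas}; the only point that requires an argument is that every group occurring satisfies the hypotheses of that proposition, namely that it is of finite homological type and, for the groups on the left-hand side, virtually torsion-free.

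The first step is to record the fact that \emph{every Dyer group is virtually torsion-free and of finite homological type}. Virtual torsion-freeness of Dyer groups is known (see \cite{Dyer}, \cite{Paris}; a Dyer group acts properly and cocompactly by isometries on a finite-dimensional contractible cell complex, the Dyer complex, and contains a torsion-free subgroup of finite index). Finite homological type then follows: such a torsion-free finite-index subgroup $H\le D$ acts freely and cocompactly on a contractible finite-dimensional complex, hence has a finite $K(H,1)$, so $\mathrm{cd}(H)<\infty$ and $H$ is of type $\mathrm{FP}$ over $\mathbb{Z}$; this gives $\mathrm{vcd}(D)<\infty$ and finite generation of $H_i(D;M)$ for every $D$-module $M$ that is finitely generated as an abelian group. (Alternatively, finite homological type can be obtained by induction on $|V|$: when $\Gamma$ is not complete one uses the amalgam decomposition of Corollary \ref{amalgamdecomposition} together with Proposition \ref{Brownformulas}, and when $\Gamma$ is complete $D=D_2\times D_p\times D_\infty$ reduces to the finite case, the case $\mathbb{Z}$, and finite Coxeter groups.) In particular $\chi(D)$ is defined. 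I would also use the two closure properties already present in the paper: a standard parabolic subgroup $D_X$ of a Dyer group is itself a Dyer group, and a direct product $D\times D'$ of Dyer groups is a Dyer group — it is associated to the Dyer graph obtained from the disjoint union of the two Dyer graphs by joining every vertex of one to every vertex of the other by an edge with label $2$ — so in particular $D\times D'$ is virtually torsion-free.

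For part (1), apply Proposition \ref{Brownformulas}\,(1) to the short exact sequence
$$1\longrightarrow D\longrightarrow D\times D'\longrightarrow D'\longrightarrow 1\,,$$
whose kernel and quotient are of finite homological type and whose middle term is virtually torsion-free; this yields $\chi(D\times D')=\chi(D)\cdot\chi(D')$. For part (2), the groups $D_{V-\{x\}}$, $D_{st(x)}$ and $D_{lk(x)}$ are standard parabolic subgroups of $D$, hence Dyer groups, hence of finite homological type, while $D$ is a Dyer group, hence virtually torsion-free; applying Proposition \ref{Brownformulas}\,(2) to the decomposition $D=D_{V-\{x\}}*_{D_{lk(x)}}D_{st(x)}$ gives $\chi(D)=\chi(D_{V-\{x\}})+\chi(D_{st(x)})-\chi(D_{lk(x)})$.

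The main obstacle is concentrated entirely in the first step: everything rests on knowing that an arbitrary Dyer group is virtually torsion-free and of finite homological type. Once that input is granted — from the geometry of the Dyer complex, or by the inductive argument sketched above — both formulas are immediate bookkeeping with Brown's theorem.
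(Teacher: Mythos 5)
Your argument is correct and has the same overall shape as the paper's: both proofs consist of checking that every group appearing is of finite homological type (and virtually torsion-free) and then quoting Proposition \ref{Brownformulas}, part (1) for the product via the obvious short exact sequence and part (2) for the amalgam. The difference lies entirely in how the key input is justified. The paper gets it in one line: by \cite[Corollary 1.2]{Soergel} every Dyer group is a finite-index subgroup of a Coxeter group, Coxeter groups are of finite homological type by \cite{Serre}, and finite homological type passes to finite-index subgroups by \cite[Lemma 6.1]{Brown}. You instead appeal to the geometry of the Dyer complex (or an induction on $|V|$). Two remarks on your route: first, the complex you invoke is Soergel's generalization of the Davis--Moussong complex, so the citation should be \cite{Soergel} rather than \cite{Dyer} or \cite{Paris}; second, a proper cocompact action on a finite-dimensional contractible complex does not by itself produce a torsion-free finite-index subgroup -- you need an extra ingredient such as linearity plus Selberg's lemma, or precisely the finite-index embedding into a Coxeter group, which is what the paper's citation supplies directly. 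So your argument closes once you source virtual torsion-freeness correctly; the paper's version is shorter because Soergel's corollary delivers both needed properties at once, while your geometric/inductive sketch has the mild advantage of not passing through an ambient Coxeter group. Your observation that $D\times D'$ is again a Dyer group (join the two Dyer graphs with edges labelled $2$) is a nice way to see that the middle term in part (1) is virtually torsion-free, though the product of two virtually torsion-free groups is virtually torsion-free in any case.
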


\begin{proof}
It was proven in \cite[Corollary 1.2]{Soergel} that every Dyer group is a subgroup of finite index in a Coxeter group.  
Further, it was proven in \cite{Serre} that Coxeter groups are of finite homological type. Since the property of being of finite homological type is preserved by taking finite index subgroups \cite[Lemma 6.1]{Brown}, we know that every Dyer group is of finite homological type and is therefore virtually torsion free. Proposition \ref{Brownformulas} shows the results of the corollary.
\end{proof}

\begin{theorem}
Let $(\Gamma, m,f)$ be a Dyer graph and $(D,V)$ be the associated Dyer system. Then
$$\frac{1}{\mathcal{G}_{(D,V)}(1)}=\chi(D).$$
\end{theorem}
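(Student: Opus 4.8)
The plan is to prove the identity by induction on the number $|V|$ of standard generators, distinguishing the case in which the Dyer graph $\Gamma$ is complete from the case in which it is not. It is convenient to carry, as part of the inductive statement, the fact that the rational function $\mathcal{G}_{(D,V)}(t)$ (rational because the growth series of every Dyer system is rational) has no zero at $t=1$; then $1/\mathcal{G}_{(D,V)}(1)$ is a well-defined rational number, under the convention that it equals $0$ when $\mathcal{G}_{(D,V)}$ has a pole at $t=1$ — which is consistent with $\chi(D)=0$ in that case. The base case $|V|=0$ is immediate: $D$ is trivial, $\mathcal{G}_{(D,V)}(t)=1$ and $\chi(D)=1$.

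In the inductive step, suppose first that $\Gamma$ is not complete, so there is a vertex $v$ with $st(v)\neq V$. By Corollary \ref{amalgamdecomposition} we have $D=D_{V-\{v\}}*_{D_{lk(v)}}D_{st(v)}$ and
$$\frac{1}{\mathcal{G}_{(D,V)}(t)}=\frac{1}{\mathcal{G}_{(D_{V-\{v\}},V-\{v\})}(t)}+\frac{1}{\mathcal{G}_{(D_{st(v)},st(v))}(t)}-\frac{1}{\mathcal{G}_{(D_{lk(v)},lk(v))}(t)}\,.$$
The three Dyer systems on the right are parabolic subsystems on $|V|-1$, at most $|V|-1$, and at most $|V|-2$ generators respectively, so the inductive hypothesis applies to each of them. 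Evaluating the displayed identity at $t=1$ and replacing each $1/\mathcal{G}_{(D_Y,Y)}(1)$ by $\chi(D_Y)$ turns it into the amalgam formula of Corollary \ref{EulerDyerFormulas}(2), whose right-hand side is $\chi(D)$.

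Suppose now that $\Gamma$ is complete. Then $D=D_2\times D_p\times D_\infty$ with $D_p=\prod_{x\in V_p}\mathbb{Z}/f(x)\mathbb{Z}$ and $D_\infty=\mathbb{Z}^l$, where $l=|V_\infty|$. Applying the direct-product formula for growth series and the multiplicativity of the rational Euler characteristic (Corollary \ref{EulerDyerFormulas}(1)) to this decomposition, it suffices to check $1/\mathcal{G}(1)=\chi$ for each factor. For the Coxeter factor $D_2$ this is exactly Serre's theorem \cite{Serre}. For a factor $\mathbb{Z}/n\mathbb{Z}$, the explicit polynomial of Theorem \ref{MainTheorem} has sum of coefficients $\mathcal{G}_{(\mathbb{Z}/n\mathbb{Z},\{1\})}(1)=n$, while $\chi(\mathbb{Z}/n\mathbb{Z})=1/n$. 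For a factor $\mathbb{Z}$, $\mathcal{G}_{(\mathbb{Z},\{1\})}(t)=(1+t)/(1-t)$, so $1/\mathcal{G}_{(\mathbb{Z},\{1\})}(t)=(1-t)/(1+t)$ vanishes at $t=1$, matching $\chi(\mathbb{Z})=0$. This closes the induction.

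The point needing the most care is the evaluation at $t=1$: since a growth series may have a pole there, one must work throughout with the rational-function identities rather than convergent series, keep the convention $1/\infty=0$ consistent, and verify that no growth series of a Dyer system has a \emph{zero} at $t=1$ (clear from the constant term being $1$ and all coefficients being non-negative, and inherited through the recursions above), so that the reciprocals appearing on the right-hand sides remain regular at $t=1$. Once this bookkeeping is in place, the remaining work is a direct substitution into the formulas of the earlier sections.
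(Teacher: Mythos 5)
Your proof is correct and follows essentially the same route as the paper: the complete case is handled via the decomposition $D=D_2\times D_p\times D_\infty$, Serre's theorem for the Coxeter factor and direct computations for the cyclic factors, while the non-complete case uses the amalgam decomposition of Corollary \ref{amalgamdecomposition} together with Corollary \ref{EulerDyerFormulas}(2). Your explicit induction on $|V|$ and the bookkeeping about poles and zeros at $t=1$ merely formalize what the paper does implicitly when it says to decompose repeatedly until all defining graphs are complete.
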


\begin{proof}
Assume first that $\Gamma$ is complete. In this case $D=D_2\times D_p\times D_\infty$,
where $D_p$ is finite and $D_\infty\cong\mathbb{Z}^l$.
Hence, by Corollary \ref{EulerDyerFormulas}
$$\chi(D)=\chi(D_2)\cdot\chi(D_p)\cdot\chi(\mathbb{Z}^l)\,.$$
Since $D_2$ is a Coxeter group we know by \cite{Serre} that $\chi(D_2)=\frac{1}{\mathcal{G}_{(D_2, V_2)}(1)}$. We get
$$\chi(D)=\frac{1}{\mathcal{G}_{(D_2, V_2)}(1)}\cdot\chi(D_p)\cdot\chi(\mathbb{Z}^l)\,.$$
Further, since $D_p$ is finite we have $\chi(D_p)=\frac{1}{\mid D_p\mid}$ and $\mathcal{G}_{(D_p, V_p)}(1)=|D_p|$. Thus
$$\chi(D)=\frac{1}{\mathcal{G}_{(D_2, V_2)}(1)}\cdot\frac{1}{\mathcal{G}_{(D_p, V_p)}(1)}\cdot\chi(\mathbb{Z}^l)\,.$$
We know that $\chi(\mathbb{Z}^l)=0$ and $\frac{1}{\mathcal{G}_{(D_\infty, V_\infty)}(1)}=\frac{(1-1)^l}{(1+1)^l}=0$ if $l>0$, and $\chi(\mathbb{Z}^l)=1$ and $\frac{1}{\mathcal{G}_{(D_\infty, V_\infty)}(1)}=1$ if $l=0$. Hence
$$\chi(D)=\frac{1}{\mathcal{G}_{(D_2, V_2)}(1)}\cdot\frac{1}{\mathcal{G}_{(D_p, V_p)}(1)}\cdot\frac{1}{\mathcal{G}_{(D_\infty, V_\infty)}(1)}=\frac{1}{\mathcal{G}_{(D,V)}(1)}\,.$$

Now assume that $\Gamma$ is not complete, then 
there exists $x\in V(\Gamma)$ such that $st(x)\neq V(\Gamma)$. Then we have
$$D=D_{V-\left\{x\right\}}*_{D_{lk(x)}}D_{st(x)}\,.$$ By Corollary \ref{amalgamdecomposition} we obtain
$$\frac{1}{\mathcal{G}_{(D, V)}(1)}=\frac{1}{\mathcal{G}_{(D_{V-\{x\}}, V-\{x\})}(1)}+\frac{1}{\mathcal{G}_{(D_{st(x)}, st(x))}(1)}-\frac{1}{\mathcal{G}_{(D_{lk(x)}, lk(x))}(1)}\,.$$
By Corollary \ref{EulerDyerFormulas} we get
$$\chi(D)=\chi(D_{V-\left\{x\right\}}*_{D_{lk(x)}}D_{st(x)})=\chi(D_{V-\{x\}})+\chi(D_{st(x)})-\chi(D_{lk(x)})\,.$$
We decompose $D_{V-\left\{x\right\}}$, $D_{st(x)}$ and $D_{lk(x)}$ again in amalgamated products. Using this strategy we will get a linear combination of Euler characteristics resp. growth series of standard parabolic subgroups of $D$ where all defining graphs are complete. Applying the above formulas we get
$$\frac{1}{\mathcal{G}_{(D,V)}(1)}=\chi(D)\,.$$
\end{proof}


\end{document}